\documentclass{amsart}
\usepackage{amssymb}
\usepackage{amsmath}
\usepackage{a4}


\newcommand{\J}{\mathcal{J}}

\begin{document}
\newtheorem{theorem}{Theorem}[section]
\newtheorem{lemma}[theorem]{Lemma}
\newtheorem{remark}[theorem]{Remark}
\newtheorem{definition}[theorem]{Definition}
\newtheorem{corollary}[theorem]{Corollary}
\newtheorem{example}[theorem]{Example}
\def\MM{\mathfrak{M}}
\def\spec{\operatorname{Spec}}
\def\refBBZa01a{\cite{refBBZa01a}}
\def\refBGVa05{\cite{BGVa05}}
\def\refDGV05{\cite{DGV05}}
\def\refGKVm02{\cite{GKVm02}}
\def\refGRVa05{\cite{GRVa05}}
\def\qedbox{\hbox{$\rlap{$\sqcap$}\sqcup$}}
\makeatletter
  \renewcommand{\theequation}{%
   \thesection.\alph{equation}}
  \@addtoreset{equation}{section}
 \makeatother
\def\BB{\mathcal{B}}
\title{Four-dimensional Osserman metrics of neutral signature}
\author{E. Garc\'{\i}a-R\'{\i}o, P. Gilkey, M. E. V\'{a}zquez-Abal\\ and R.
V\'{a}zquez-Lorenzo }
\begin{address}{EGR, MEVA, and RVL: Department of Geometry and Topology, Faculty of
Mathematics, University of Santiago de Compostela, 15782 Santiago de Compostela, Spain}
\end{address}
\begin{email}{xtedugr@usc.es, meva@zmat.usc.es, ravazlor@usc.es}\end{email}
\begin{address}{PG: Mathematics Department, University of Oregon, Eugene, OR
97403, USA and Max Planck Institute in the Math. Sciences, Leipzig Germany}\end{address}
\begin{email}{ gilkey@uoregon.edu}\end{email}

\begin{abstract}In the algebraic
context, we show that null Osserman, spacelike Osserman, and timelike Osserman
are equivalent conditions for a model of signature (2,2). We also
classify the null Jordan Osserman models  of signature (2,2). In
the geometric context, we show that a pseudo-Riemannian manifold of signature
(2,2) is null Jordan Osserman if and only if either it has constant sectional
curvature or it is locally a complex space form.
\end{abstract}
\keywords{Spacelike,
timelike and null Jacobi operator; Osserman and Jordan Osserman metric;
 neutral signature (2,2).
\newline 2000 {\it Mathematics Subject Classification.} 53C20}
\maketitle

\section{Introduction}\label{sect-1} Let $\mathcal{M}:=(M,g)$ be a pseudo-Riemannian manifold.
We say that a tangent vector $v$ is {\it spacelike}, {\it timelike}, or {\it
null} if $g(v,v)>0$, if $g(v,v)<0$, or if $g(v,v)=0$, respectively. Geometric
properties derived from conditions on spacelike, timelike and null vectors
can have quite different meanings. For instance, the notions of spacelike,
timelike and null geodesic completeness are non-equivalent and independent
conditions. Although spacelike and timelike conditions can sometimes become
equivalent (for example as concerns boundedness conditions on the
sectional  curvature), they can be quite different than similar null
conditions, which are sometimes related to the conformal geometry of the
manifold.

Let $R(x,y):=\nabla_x\nabla_y-\nabla_y\nabla_x-\nabla_{[x,y]}$ be the curvature
operator of $\mathcal{M}$. The associated {\it Jacobi operator}
$\J_R(x):y\rightarrow R(y,x)x$ encodes much geometric information concerning
the manifold. One has that $\J_R(\lambda v)=\lambda^2\J_R(v)$; this rescaling
property plays a crucial role. Let $S^\pm(\mathcal{M})$ be the unit sphere
bundles of spacelike and timelike unit tangent vectors in $M$ and let
$N(\mathcal{M})$ be the null cone of non-zero null vectors. One says that
$\mathcal{M}$ is spacelike (resp. timelike) Osserman if the eigenvalues of
$\J_R$ are constant on $S^+(\mathcal{M})$ (resp. on $S^-(\mathcal{M}))$.
Normalizing the length of the tangent vector to be $\pm1$ takes into account
the scaling of the Jacobi operator $\J_R(\lambda v)=\lambda^2\J_R(v)$ noted
above. Perhaps somewhat surprisingly, spacelike Osserman and timelike Osserman are equivalent conditions
\cite{GKVaVl,Gilkey1}.

We shall say that $\mathcal{M}$ is {\it null Osserman} if the eigenvalues of
$\J_R$ are constant on the null cone $N(\mathcal{M})$; with this definition, if $\mathcal{M}$ is null
Osserman, then necessarily $\J_R(v)$ is nilpotent if $v\in N(\mathcal{M})$ and $\J_R(v)$ has only the
eigenvalue $0$. Any spacelike or timelike Osserman manifold is necessarily null Osserman;
the converse can fail in general -- see, for example,
\cite{GKVa} in the Lorentzian setting.

The Jordan normal form plays a crucial role in the higher signature setting -- a self-adjoint linear
transformation need not be determined by its eigenvalues if the metric in question is indefinite. One
says that $\mathcal{M}$ is spacelike, timelike, or null Jordan Osserman if the Jordan normal form of
$\J_R(\cdot)$ is constant on $S^+(\mathcal{M})$, on $S^-(\mathcal{M})$, or on $N(\mathcal{M})$,
respectively. It is known \cite{Gilkey1,G-I,G-I-2} that spacelike and timelike Jordan Osserman are
inequivalent conditions; further neither necessarily implies the null
Jordan Osserman condition.

In this paper, we concentrate on the $4$-dimensional setting. Chi
\cite{Chi} showed that any Riemannian Osserman $4$-manifold is
locally isometric to a $2$-point homogeneous space; it follows from
later work  \cite{BBG,GKVa} that any Lorentzian $4$-manifold has
constant sectional curvature. However the situation is much more
complicated in neutral signature $(2,2)$; there exist many  examples
of nonsymmetric Osserman pseudo-Riemannian manifolds of neutral
signature -- see \cite{DGV} and \cite{GR-VA-VL}. Indeed, despite the
results  of \cite{ABBR, BBR,DGV1,GV},  the general problem of
obtaining a complete description of $4$-dimensional  Osserman
metrics of neutral signature remains open.

It is convenient to work algebraically. Let $V$ be a finite dimensional
real vector space which is equipped with a non-degenerate symmetric bilinear form
$\langle\cdot,\cdot\rangle$ of signature $(p,q)$. Let $A\in\otimes^4(V^*)$ be an algebraic curvature
tensor on $V$, i.e. a tensor which has the symmetries of the Riemann curvature tensor:
\begin{eqnarray*}
&&A(x,y,z,v)=-A(y,x,z,v)=A(z,v,x,y),\\
&&A(x,y,z,v)+A(y,z,x,v)+A(z,x,y,v)=0\,.
\end{eqnarray*}
This defines a model $\MM:=(V,\langle\cdot,\cdot\rangle,A)$. We shall often prove results on the
algebraic level (i.e. for models), and then obtain corresponding conclusions in the geometric
context. The notions spacelike unit vector, timelike unit vector, null vector, Jacobi operator, etc.
extend naturally to this setting.

\subsection{Null Osserman algebraic curvature tensors}\label{sect-1.1}
Henceforth, let $\langle\cdot,\cdot\rangle$ be an inner product of signature $(2,2)$ on a $4$-dimensional real vector space $V$.
Fix an orientation of $V$ and let $\BB=\{e_1,e_2,e_3,e_4\}$ be an oriented orthonormal basis for $V$ where $e_1$ and
$e_2$ are timelike and where $e_3$ and $e_4$ are spacelike.

At the algebraic level, in signature $(2,2)$ the conditions spacelike Osserman,
timelike Osserman, spacelike Jordan Osserman and timelike Jordan Osserman are equivalent to the condition that $\MM$ is Einstein and
self-dual with respect to a suitably chosen local orientation  \cite{ABBR,GKV}. In Section \ref{sect-2}, we will
establish the following result which shows that these conditions are also equivalent to null Osserman:

\begin{theorem}\label{thm-1.1}
Let $\MM$ be a model of neutral signature $(2,2)$. Then the following conditions are
equivalent:
\begin{enumerate}
\item $\MM$ is spacelike Osserman.
\item $\MM$ is timelike Osserman.
\item $\MM$ is spacelike Jordan Osserman.
\item $\MM$ is timelike Jordan Osserman.
\item $\MM$ is Einstein and self-dual for a suitably chosen local orientation.
\item $\MM$ is null Osserman.
\end{enumerate}\end{theorem}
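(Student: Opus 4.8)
The plan is to use the already-cited equivalences to reduce the theorem to a single new implication. By \cite{ABBR,GKV} together with \cite{GKVaVl,Gilkey1}, conditions (1)--(5) are known to be mutually equivalent, so it suffices to splice (6) into this chain. One direction is immediate: if $\MM$ is spacelike Osserman then, writing $c_k(v)$ for the $k$-th elementary symmetric function of the eigenvalues of $\J_R(v)$, each $c_k$ is a homogeneous polynomial of degree $2k$ that is constant on $S^+(\MM)$; by homogeneity the identity $c_k(v)=\lambda_k\langle v,v\rangle^k$ propagates from $S^+(\MM)$ to the open cone $\{\langle v,v\rangle>0\}$ and hence, being polynomial, to all of $V$. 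Evaluating on the null cone gives $c_k(v)=0$ for every $v\in N(\MM)$, so $\J_R(v)$ is nilpotent and $\MM$ is null Osserman; thus (1)$\Rightarrow$(6). The substance of the theorem is the converse (6)$\Rightarrow$(5), which I would prove in two stages: first Einstein, then self-duality.

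For the Einstein property I would use only the lowest-order consequence of nilpotency. Since $\operatorname{tr}\J_R(v)=\operatorname{Ric}(v,v)$ and null Osserman forces $\J_R(v)$ to be nilpotent on $N(\MM)$, the quadratic form $v\mapsto\operatorname{Ric}(v,v)$ vanishes on the null cone $\{\langle v,v\rangle=0\}$. Because $\langle\cdot,\cdot\rangle$ is nondegenerate this cone is an irreducible quadric, so any quadratic form vanishing on it is a scalar multiple of $\langle\cdot,\cdot\rangle$; hence $\operatorname{Ric}=\mu\langle\cdot,\cdot\rangle$ and $\MM$ is Einstein.

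The self-dual condition is the main obstacle and requires the higher-order vanishing $\operatorname{tr}\J_R(v)^k=0$ for $k=2,3,4$ on $N(\MM)$, i.e. the full nilpotency rather than just the trace. Here I would pass to the curvature operator $\mathcal{R}$ on $\Lambda^2 V$. In signature $(2,2)$ one has $*^2=\mathrm{Id}$ on $2$-forms, giving the splitting $\Lambda^2V=\Lambda^+\oplus\Lambda^-$, and for an Einstein model the off-diagonal (trace-free Ricci) block vanishes, so $\mathcal{R}=(W^++\tfrac{\tau}{12})\oplus(W^-+\tfrac{\tau}{12})$ with $W^\pm$ trace-free on the three-dimensional spaces $\Lambda^\pm$. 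For a null vector $v$ the operator $\J_R(v)$ is encoded by the restriction of $\mathcal{R}$ to the three-dimensional space $v\wedge V$, and since $v$ is null this space is governed by the self-dual and anti-self-dual totally null $2$-planes containing $v$. Tracking the nilpotency of the induced self-adjoint operator on the Lorentzian quotient $v^\perp/\langle v\rangle$ as $v$ ranges over $N(\MM)$, I expect the conditions $\operatorname{tr}\J_R(v)^k=0$ to translate into polynomial identities on $W^+$ and $W^-$ that force one chirality --- say $W^-$ after a choice of orientation --- to vanish. The delicate point, and the part I expect to be genuinely hard, is showing that full null-cone nilpotency is strong enough to annihilate an entire Weyl factor rather than merely constraining its eigenvalues; this is exactly where the rescaling homogeneity $\J_R(\lambda v)=\lambda^2\J_R(v)$ and the twistorial geometry of the null directions must be used together.

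Once $\MM$ is Einstein and self-dual for the chosen orientation, condition (5) holds, and the cited equivalence (5)$\Leftrightarrow$(1)--(4) closes the cycle: (6)$\Rightarrow$(5)$\Rightarrow$(1)$\Rightarrow$(6), whence all six conditions are equivalent.
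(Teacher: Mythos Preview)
Your overall architecture matches the paper's: reduce to (1)$\Rightarrow$(6) and (6)$\Rightarrow$(5), the former being easy and the latter splitting into ``Einstein'' plus ``self-dual.'' Your arguments for (1)$\Rightarrow$(6) and for the Einstein property are correct and essentially equivalent to the paper's Lemma~\ref{lem-2.1}: the paper polarizes explicitly with $n_i^\pm=s_i\pm t$, whereas you invoke the standard fact that a quadratic form vanishing on the null cone of a nondegenerate indefinite metric is a scalar multiple of the metric; both are fine.

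The genuine gap is the self-duality step, which you do not prove but only describe as a hope. Your sketch via the curvature operator on $\Lambda^2$ and ``twistorial geometry of null directions'' is too vague to assess, and you yourself flag it as ``the part I expect to be genuinely hard.'' The paper handles this step by a concrete and unglamorous but effective computation. For the one-parameter family of null vectors $u=a e_1+b e_2+a e_3+b e_4$ it works out the characteristic polynomial of $\J_A(u)$ explicitly as $\lambda^2\bigl(\lambda^2-Q(a,b)\,\mathcal{E}_1\bigr)$, where $\mathcal{E}_1$ is a fixed curvature scalar and $Q$ is a quartic in $(a,b)$. Nilpotency forces $Q\,\mathcal{E}_1\equiv 0$; if $\mathcal{E}_1\neq 0$, the vanishing of all coefficients of $Q$ is checked directly to annihilate $W_A^-$. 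If instead $\mathcal{E}_1=0$ for \emph{every} oriented orthonormal frame, a single frame reflection extracts the identity $A_{1214}=A_{1223}$ in all frames, and a separate lemma (Lemma~\ref{lem-2.3}, itself proved by hyperbolic frame rotations) shows that this one identity, holding universally, already forces $W_A^+=0$. None of this is conceptual in the way you envisage; it is explicit polynomial bookkeeping that your proposal does not supply. Without it, or a genuine substitute, the implication (6)$\Rightarrow$(5) remains unproved.
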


\begin{remark}\label{rmk-1.2} \rm The action of homotheity on the null vectors is a central one in this
subject and it is worth saying a few extra words concerning this. With our definition, it is immediate
that $\MM=(V,\langle\cdot,\cdot\rangle,A)$ is null Osserman implies that $0$ is the only eigenvalue of $\mathcal{J}_A$
on $N(V,\langle\cdot,\cdot\rangle)$. There is, although, an alternate, and different,
formulation one could use. One says that $\MM$ is {\it
projectively null Osserman} if either $\MM$ is null Osserman or if given $0\ne n_1,n_2\in N(V,\langle\cdot,\cdot\rangle)$,
there is a non-zero constant $\lambda$ so that
$\operatorname{Spec}(\mathcal{J}_A(n_1))=\lambda\operatorname{Spec}(\mathcal{J}_A(n_2))$. We refer to \cite{BGNS} for related work;
we only introduce this concept for the sake of completeness as it plays no role in our development.\end{remark}

\subsection{Null Jordan Osserman algebraic curvature tensors}\label{sect-1.2}
There are two algebraic curvature tensors which will play
a distinguished role in our development. If
$\Psi$ is an anti-symmetric endomorphism of $V$, define the associated algebraic curvature tensor $A^\Psi$ by setting:
\begin{equation}\label{eqn-1.a}
A^\Psi(x,y,z,v):=\langle\Psi y,z\rangle\langle\Psi x,v\rangle
-\langle\Psi x,z\rangle\langle\Psi y,v\rangle-2\langle\Psi x,y\rangle\langle\Psi z,v\rangle\,.
\end{equation}
Such tensors span the linear space of all algebraic curvature tensors \cite{F03}.

The sectional curvature of a non-degenerate $2$-plane $\pi=\operatorname{Span}\{x,y\}$ is given by:
$$K_A(\pi):=\frac{A(x,y,y,x)}{\langle x,x\rangle\langle y,y\rangle-\langle x,y\rangle\langle x,y\rangle}\,;$$
$A$ has {\it constant sectional curvature} $\kappa_0$ if and only if $A=\kappa_0A^0$ where $A^0$ is the algebraic curvature
tensor of constant sectional curvature $+1$ defined by:
\begin{equation}\label{eqn-1.b}
A^0(x,y,z,v):=\langle y,z\rangle\langle x,v\rangle-\langle x,z\rangle\langle y,v\rangle\,.
\end{equation}
We note for future reference that Equations (\ref{eqn-1.a}) and (\ref{eqn-1.b}) imply that:
\begin{equation}\label{eqn-1.c}
\mathcal{J}_{A^\Psi}(x):y\rightarrow3\langle y,\Psi x\rangle\Psi x\quad\text{and}\quad
\mathcal{J}_{A^0}(x):y\rightarrow\langle x,x\rangle y-\langle x,y\rangle x\,.
\end{equation}

Assume that $\Psi$ is skew-adjoint. We say that
$\Psi$ is an {\it orthogonal complex structure} if $\Psi^2=-\operatorname{id}$ and that $\Psi$ is an {\it adapted paracomplex
structure} if
$\Psi^2=\operatorname{id}$. We say that a triple of skew-adjoint operators $\{\Psi_1,\Psi_2,\Psi_3\}$ is a
{\it paraquaternionic structure} if $\Psi_1^2=-\operatorname{id}$, $\Psi_2^2=\operatorname{id}$, $\Psi_3^2=\operatorname{id}$, and if
$\Psi_i\Psi_j+\Psi_j\Psi_i=0$ for $i\ne j$. We can define a paraquaternionic structure by setting:
\begin{equation}\label{eqn-1.d}
\begin{array}{llll}
\Psi_1e_1=-e_2,&\Psi_1e_2=\phantom{-}e_1,&\Psi_1e_3=\phantom{-}e_4,&\Psi_1e_4=-e_3,\\
\Psi_2e_1=\phantom{-}e_3,&\Psi_2e_2=\phantom{-}e_4,&\Psi_2e_3=\phantom{-}e_1,&\Psi_2e_4=\phantom{-}e_2,\\
\Psi_3e_1=\phantom{-}e_4,&\Psi_3e_2=-e_3,&\Psi_3e_3=-e_2,&\Psi_3e_4=\phantom{-}e_1.
\end{array}\end{equation}
Note that $\Psi_3=\Psi_1\Psi_2$. If $\{\bar\Psi_1,\bar\Psi_2,\bar\Psi_3\}$ is any other paraquaternionic structure on $V$, there exists
an isometry $\phi$ of $V$ so $\phi^*\bar\Psi_1=\Psi_1$, $\phi^*\bar\Psi_2=\Psi_2$, and $\phi^*\bar\Psi_3=\pm\Psi_3$; this
slight sign ambiguity will play no role in our constructions.

Let $x$ be a spacelike or timelike vector. Then there is an orthogonal direct sum decomposition $V=x\cdot\mathbb{R}\oplus x^\perp$.
Since $\mathcal{J}_A(x)x=0$, $\mathcal{J}_A(x)$ preserves $x^\perp$. There are four different possibilities which describe the Jordan
normal form of $\mathcal{J}_A(x)$ restricted to $x^\perp$, we refer to \cite{BBR,GKV} for further details:
\begin{equation}\label{eqn-1.e}
\begin{array}{l}
\left(\begin{array}{ccc}
   \alpha & 0 & 0
   \\
   0 & \beta & 0
   \\
   0 & 0 & \gamma
\end{array}\right),\
\left(\begin{array}{ccc}
   \alpha & -\beta & 0
   \\
   \beta & \alpha & 0
   \\
   0 & 0 & \gamma
\end{array}\right),\
\left(\begin{array}{ccc}
   \beta & 0 & 0
   \\
   0 & \alpha & 0
   \\
   0 & 1 & \alpha
\end{array}\right),\
\left(\begin{array}{ccc}
   \alpha & 0 & 0
   \\
   1 & \alpha & 0
   \\
   0 & 1 & \alpha
\end{array}\right).
\\
\qquad\emph{Type Ia}\qquad\qquad\emph{Type Ib}\qquad\quad
\qquad\emph{Type II}\qquad\qquad \emph{Type III}
\end{array}
\end{equation}
Type Ia corresponds to a diagonalizable operator, Type Ib to an operator with a complex
eigenvalue and Type II (resp. Type III) to a double (resp. triple) root of the minimal polynomial of the operator. If $\MM$ is
spacelike, timelike, or null Osserman, then the Jordan normal form of $\mathcal{J}_A$ is constant on the spacelike and timelike unit
vectors and we classify $A$ according to the $4$-Types above. In Section \ref{sect-3}, we construct, up to
isomorphism, all the spacelike Jordan Osserman algebraic curvature tensors and perform the analysis necessary to
establish the following classification result:

\begin{theorem}\label{thm-1.3}
Let $\MM:=(V,\langle\cdot,\cdot\rangle,A)$ be a model of signature $(2,2)$. Then $\MM$ is null
Jordan Osserman if and only if $A$ is of Type Ia and one of the following holds:
\begin{enumerate}
\item There exists a constant $\kappa_0$ so that $A=\kappa_0A^{0}$.
\item There exists constants $\kappa_0$ and $\kappa_J$ with $\kappa_J\ne0$ so that $A=\kappa_0
A^0+\kappa_JA^J$ where
$J$ is an orthogonal complex structure on
$V$.
\item There exists a constant $\kappa_P\ne0$ so that $A=\kappa_PA^P$ where $P$ is  an adapted
paracomplex structure on
$V$t.
\item There exist constants $\kappa_1,\kappa_2,\kappa_3$ so that $\kappa_2\kappa_3
(\kappa_2+\kappa_1)(\kappa_3+\kappa_1)>0$, so that the associated eigenvalues
$\{3\kappa_1,-3\kappa_2,-3\kappa_3\}$ are all distinct, and so that $A=\kappa_1 A^{\Psi_1} +
\kappa_2 A^{\Psi_2} +
\kappa_3 A^{\Psi_3}$ where $(\Psi_1,\Psi_2,\Psi_3)$ is a paraquaternionic structure on $V$.
\end{enumerate}
\end{theorem}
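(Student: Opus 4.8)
The plan is to invoke Theorem~\ref{thm-1.1} to reduce at once to the Einstein self-dual case, and then to replace the Jordan-form question on the null cone by a single rank computation. Since null Jordan Osserman trivially implies null Osserman, Theorem~\ref{thm-1.1} shows $\MM$ is Einstein and self-dual for a suitable orientation. Hence the Weyl part of $A$ is a trace-free symmetric endomorphism $\omega$ of the three-dimensional space spanned by the self-dual two-forms $\langle\Psi_i\,\cdot\,,\cdot\rangle$ attached to the paraquaternionic structure (\ref{eqn-1.d}), and the Jordan type of $\omega$, with respect to the induced Lorentzian inner product, is exactly the Type Ia/Ib/II/III classification of $\mathcal{J}_A$ on $x^\perp$ recorded in (\ref{eqn-1.e}). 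Using the normal forms of Section~\ref{sect-3} I may write $A=\kappa_0A^0+\sum_{i,j}\omega_{ij}A^{\Psi_i,\Psi_j}$, where $A^{\Psi_i,\Psi_j}$ is the polarization of (\ref{eqn-1.a}); in the diagonalizable case this is $A=\kappa_0A^0+\kappa_1A^{\Psi_1}+\kappa_2A^{\Psi_2}+\kappa_3A^{\Psi_3}$, and a short computation with (\ref{eqn-1.c}) gives that the eigenvalues of $\mathcal{J}_A$ on $x^\perp$ are $\{3\kappa_1,-3\kappa_2,-3\kappa_3\}$, matching statement (4).

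The decisive structural step concerns null vectors. Because $\Psi_1$ is an isometry while $\Psi_2,\Psi_3$ are anti-isometries, for a null $v$ the four vectors $v,\Psi_1v,\Psi_2v,\Psi_3v$ are mutually orthogonal and null; since a maximal totally isotropic subspace has dimension two, they span a totally isotropic plane $L_v$ with $L_v=L_v^\perp$. Every term of $\mathcal{J}_A(v)$ coming from (\ref{eqn-1.c}) and its polarizations lies in $\operatorname{Span}\{v,\Psi_jv\}\subseteq L_v$, while any $y\in L_v=L_v^\perp$ is annihilated; thus $\operatorname{image}\mathcal{J}_A(v)\subseteq L_v\subseteq\ker\mathcal{J}_A(v)$. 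Consequently $\mathcal{J}_A(v)^2=0$ for every null $v$ and every Einstein self-dual $A$, the only possible null Jordan forms have blocks of size at most two, and $\MM$ is null Jordan Osserman if and only if $\operatorname{rank}\mathcal{J}_A(v)$ is constant on the null cone.

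To make this effective I parametrize the null cone by $v=\cos\phi\,e_1+\sin\phi\,e_2+\cos\theta\,e_3+\sin\theta\,e_4$ and check that, writing $w=\Psi_1v$ and $\psi=\phi-\theta$, one has $\Psi_2v=\cos\psi\,v+\sin\psi\,w$ and $\Psi_3v=-\sin\psi\,v+\cos\psi\,w$; hence everything reduces to the single angle $\psi$. In the basis $\{v,w\}$ of $L_v$ the operator $\mathcal{J}_A(v)$ is represented by a symmetric $2\times2$ matrix $M(\psi)=3\,U(\psi)\,\omega\,U(\psi)^{t}$ corrected by the scalar term $-\kappa_0$ in its $(1,1)$-entry, where the columns of the $2\times3$ matrix $U(\psi)$ are the coordinates of $\Psi_1v,\Psi_2v,\Psi_3v$ in $\{v,w\}$; and $\operatorname{rank}\mathcal{J}_A(v)=\operatorname{rank}M(\psi)$. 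For Type Ia one finds that $\det M(\psi)$ is affine in $\cos^2\psi$, with endpoint values proportional to $(\kappa_1+\kappa_3)(3\kappa_2-\kappa_0)$ and $(\kappa_1+\kappa_2)(3\kappa_3-\kappa_0)$. Constancy of rank is therefore equivalent to these two quantities having the same nonzero sign, or to $\det M\equiv0$ with $M$ of constant rank. Sorting this out and normalizing by the orientation and the choice of paraquaternionic frame produces exactly families (1)--(4): the vanishing possibilities give constant sectional curvature together with the complex and paracomplex space forms; the asymmetry that the scalar term survives for $J=\Psi_1$ but must be dropped for $P=\Psi_2$ reflects that $\det M$ is constant in the complex case but affine and non-constant once $\kappa_0\neq0$ in the paracomplex one; and in the generic case of three distinct eigenvalues the equal-sign condition with $\kappa_0=0$ is precisely $\kappa_2\kappa_3(\kappa_2+\kappa_1)(\kappa_3+\kappa_1)>0$.

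The main obstacle is the converse exclusion: showing that Types Ib, II and III are never null Jordan Osserman. Here $\omega$ is not diagonalizable over $\mathbb{R}$, the computation of $M(\psi)$ must be carried out from the explicit non-diagonal normal forms of Section~\ref{sect-3}, and the point is to verify that $\operatorname{rank}M(\psi)$ genuinely jumps along the null cone. For instance a Type Ib tensor whose $\omega$ couples $\Psi_1$ and $\Psi_2$ yields $\det M(\psi)=-9\cos^2\psi$ when $\kappa_0=0$, so that the rank drops from two to one at $\psi=\tfrac\pi2$; the verification that an analogous rank jump occurs for each of Ib, II and III, rather than the clean Type Ia computation, is where the real work lies.
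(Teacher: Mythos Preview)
Your approach is correct and follows the same overall strategy as the paper: reduce to the Einstein self-dual setting, parametrize the null cone by a single angular parameter, and decide the null Jordan Osserman condition by the rank of a $2\times2$ matrix. A few points of comparison are worth recording. First, your observation that $\operatorname{image}\mathcal{J}_A(v)\subseteq L_v\subseteq\ker\mathcal{J}_A(v)$, hence $\mathcal{J}_A(v)^2=0$ for \emph{every} Einstein self-dual $A$ and every null $v$, is a genuine conceptual improvement over the paper, which only derives this in the Type~Ia distinct-eigenvalue subcase (Section~\ref{sect-3.5}) and does not invoke it for Types~II and~III. Second, the paper's $2\times2$ matrix $\mathcal{C}_A(\theta)$ is obtained by projecting to the spacelike plane $V_+=\operatorname{Span}\{e,\Psi_1e\}$ rather than by reading off coordinates in $L_v$; the two are equivalent since $\pi_+$ is injective on $L_v$, and your endpoint values $(\kappa_1+\kappa_3)(3\kappa_2-\kappa_0)$, $(\kappa_1+\kappa_2)(3\kappa_3-\kappa_0)$ specialize to the paper's $(\kappa_1+\kappa_3)\kappa_2$, $(\kappa_1+\kappa_2)\kappa_3$ once one uses the identity $3A^0=A^{\Psi_1}-A^{\Psi_2}-A^{\Psi_3}$ to absorb $\kappa_0$ into the $\kappa_i$. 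You carry $\kappa_0$ as a redundant parameter; the paper sets it to zero from the outset in the distinct-eigenvalue case, which is why item~(4) of the theorem has no $\kappa_0$. Third, for the exclusion of Types~Ib, II and~III the paper does \emph{not} push the $M(\psi)$ computation through the nondiagonal normal forms as you propose. For Type~Ib it does compute $\mathcal{C}_A(\theta)$ and checks a sign change of its determinant, but for Types~II and~III it abandons the parametrized approach entirely and simply evaluates $\mathcal{J}_A$ at the two explicit null vectors $e_2\pm e_3$ in the canonical frame, reading off a rank jump from the resulting $4\times4$ matrices. That shortcut is quicker than what you outline, and since you flag this step as ``where the real work lies,'' it may be worth adopting.
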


\begin{remark}\label{re:nuevo_1}\rm
The inequality $\kappa_2\kappa_3
(\kappa_2+\kappa_1)(\kappa_3+\kappa_1)>0$ is equivalent to the fact
that the cross ratio
\[
(0,\kappa_1,-\kappa_3,-\kappa_2)=\frac{\kappa_3(\kappa_2+\kappa_1)}{\kappa_2(\kappa_3+\kappa_1)}>0.
\]
Let $\mathbb{S}^2$ be the unit sphere in $\mathbb{R}^3$. This inequality is equivalent to the
fact that the set of points
$(0,-\kappa_3,-\kappa_2)$  and $(\kappa_1,-\kappa_3,-\kappa_2)$ give
the corresponding circles in $\mathbb{S}^2$ (via the stereographic
projection) the same orientation \cite{Marden}.
\end{remark}

\subsection{Null Jordan Osserman manifolds}\label{sect-1.3}
We characterize those neutral signature
$4$-manifolds which are null Jordan Osserman; null Osserman and null Jordan Osserman are not
equivalent conditions as the analysis of Section \ref{sect-3.4} shows.  We say that $\mathcal{M}$ is locally a complex space form if it
is an indefinite K\"{a}hler manifold  of constant
 holomorphic sectional curvature. We will use Theorem \ref{thm-1.3} to establish the following geometric result in Section
\ref{sect-4}:
\begin{theorem}\label{thm-1.4}
Let $\mathcal{M}$ be a connected pseudo-Riemannian manifold of neutral signature
$(2,2)$. Then $\mathcal{M}$ is null Jordan Osserman if and only either $\mathcal{M}$ has constant sectional curvature or $\mathcal{M}$
is locally a complex space form.
\end{theorem}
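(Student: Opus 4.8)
The plan is to combine the pointwise algebraic classification of Theorem~\ref{thm-1.3} with the second Bianchi identity. Since a null Jordan Osserman manifold is in particular null Osserman, Theorem~\ref{thm-1.1} shows it is Einstein and self-dual; as it is self-dual the full Weyl tensor equals its self-dual part $W^{+}$, viewed as a self-adjoint endomorphism of the rank-$3$ bundle $\Lambda^{+}$ of self-dual $2$-forms, and being Einstein the contracted second Bianchi identity gives that $W^{+}$ is divergence free, $\delta W^{+}=0$. This differential identity is the engine of the proof.

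First I would dispose of sufficiency. If $\mathcal{M}$ has constant sectional curvature $\kappa_{0}$ then $A=\kappa_{0}A^{0}$ at each point, which is case (1) of Theorem~\ref{thm-1.3}; if $\mathcal{M}$ is locally a complex space form of holomorphic sectional curvature $c$ then $A=\tfrac{c}{4}(A^{0}+A^{J})$ with $J$ the parallel complex structure, which is case (2). In either case Theorem~\ref{thm-1.3} makes the model null Jordan Osserman at every point, and since the nilpotent Jordan form of $\mathcal{J}_{A}$ on the null cone is the same at every point, the connected manifold $\mathcal{M}$ is null Jordan Osserman.

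For necessity, suppose $\mathcal{M}$ is null Jordan Osserman. By Theorem~\ref{thm-1.3} the curvature is everywhere Type Ia and of one of the four types; equivalently $W^{+}$ is diagonalizable, the four cases being distinguished by the multiplicities of its trace-free eigenvalues together with the causal character in $\Lambda^{+}$ of the distinguished eigendirection, which singles out a complex structure $J$ in case (2) and an adapted paracomplex structure $P$ in case (3). These invariants vary semicontinuously, so the type is locally constant on a dense open set $\mathcal{U}$, and I would run the following analysis on each component. On a case (2) component the eigendirection gives a smooth $J$ recovered from $A$; substituting $A=\kappa_{0}A^{0}+\kappa_{J}A^{J}$ into $\delta W^{+}=0$ should force $\nabla J=0$, so $\mathcal{M}$ is indefinite K\"ahler, and then the K\"ahler identity $A(JX,JY,Z,W)=A(X,Y,Z,W)$ forces $\kappa_{0}=\kappa_{J}$, whence $A=\tfrac{c}{4}(A^{0}+A^{J})$ and a Schur argument makes $c$ constant: a complex space form. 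On a case (3) component the same step yields a parallel $P$, but the resulting para-K\"ahler identity $A(X,Y,PZ,PW)=-A(X,Y,Z,W)$ is incompatible with the pure form $A=\kappa_{P}A^{P}$ for $\kappa_{P}\ne0$, so case (3) is empty; on a case (4) component $\delta W^{+}=0$ with three distinct eigenvalue functions forces, as in Chi's treatment \cite{Chi} of the Riemannian case, two of them to coincide, contradicting distinctness, so case (4) is empty as well.

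It then remains to pass from $\mathcal{U}$ to all of $M$: the constant curvature locus $\{W^{+}=0\}$ is closed, while on its complement case (2) holds with $c$ locally constant and nonzero, so $|W^{+}|$ cannot tend to $0$ along a boundary; hence $\{W^{+}=0\}$ is also open, and connectedness of $M$ forces either $W^{+}\equiv0$ (constant sectional curvature) or $W^{+}$ nowhere zero (complex space form). The main obstacle is the differential heart of the previous paragraph: deducing $\nabla J=0$ from $\delta W^{+}=0$ in case (2), and---harder---excluding the three-distinct-eigenvalue case (4). Both require extracting from $\delta W^{+}=0$ enough first-order information about the eigenvalue and eigenvector fields of $W^{+}$ to force the needed degeneracies, and extra care is needed because the metric on $\Lambda^{+}$ is indefinite, so one must first use the Type Ia conclusion to exclude the complex and nilpotent Jordan types (Ib, II, III) that the indefinite spectral theory would otherwise permit.
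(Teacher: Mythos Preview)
Your proposal takes a genuinely different route from the paper. The paper's proof of Theorem~\ref{thm-1.4} is two lines long: Theorem~\ref{thm-1.3} forces Type~Ia, and then the geometric classification of Bla\v zi\'c--Bokan--Raki\'c~\cite{BBR} is invoked as a black box, stating that any Type~Ia Osserman manifold of signature $(2,2)$ is locally of constant sectional curvature, a complex space form, or a paracomplex space form. The paracomplex space form, with curvature $\tfrac{\kappa}{4}(A^{0}-A^{P})$, is then excluded because by Section~\ref{sect-3.4} that algebraic form is null Jordan Osserman only when the $A^{0}$ coefficient vanishes. No second Bianchi identity, no $\delta W^{+}$, no Chi-style argument appears in the paper.

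What you are attempting is essentially to \emph{reprove}~\cite{BBR} from scratch via $\delta W^{+}=0$. That is a legitimate program, but it is substantially harder than what is needed here, and the two steps you yourself flag as ``the main obstacle'' --- deducing $\nabla J=0$ in case~(2) and eliminating the three-distinct-eigenvalue case~(4) --- are genuine gaps in your write-up, not routine verifications. In the Riemannian setting Chi's argument for the distinct-eigenvalue case is delicate and uses positivity of the induced metric on $\Lambda^{+}$ in essential ways; in signature $(2,2)$ the metric on $\Lambda^{+}$ is Lorentzian, so one cannot simply transplant the argument, and indeed~\cite{BBR} devotes considerable effort to this. Your case~(3) elimination also presupposes the same unproven step (obtaining $\nabla P=0$ from $\delta W^{+}=0$) before the para-K\"ahler identity can be invoked. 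In short, the differential-geometric core of your plan is exactly the content of~\cite{BBR}, and citing that reference --- as the paper does --- is the efficient path.
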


\begin{remark}\label{re:nuevo_2}\rm
Recall that there is another family of four-dimensional Osserman
manifolds with diagonalizable Jacobi operator: the paracomplex space
forms \cite{BBR}. Although the geometry of complex and paracomplex
space forms is very similar, the Jordan-Osserman condition
distinguishes them. So far, up to our knowledge, this is the first
algebraic curvature condition which  distinguishes between these two geometries.

\end{remark}

\section{null Osserman models of signature $(2,2)$}\label{sect-2}

We will work in the algebraic context to prove Theorem \ref{thm-1.1}. Here is a brief outline to this section. Previous work
establishes that Assertions (1)-(5) are equivalent. In Section \ref{sect-2.1}, we introduce various
notational conventions and show that spacelike Osserman models are null Osserman and that null Osserman
models are Einstein. Thus to complete the proof, it suffices to show null Osserman models are self-dual
or anti-self-dual. In Section \ref{sect-2.2}, we examine Einstein models. Lemma \ref{lem-2.2} describes
the Weyl curvature operators in that setting and Lemma \ref{lem-2.3} gives an alternate
characterization of self-duality for an Einstein model. We use Lemma \ref{lem-2.3} to complete the
proof of Theorem \ref{thm-1.1} in Section
\ref{sect-2.3}.

\subsection{Notational conventions}\label{sect-2.1} Let
$\MM:=(V,\langle\cdot,\cdot\rangle,A)$ be a neutral signature $4$-dimensional model. We use the inner
product to raise indices and to define an associated Jacobi operator
$\J_A$, which is characterized by the identity:
$$
\langle\J_A(x)y,z\rangle=A(y,x,x,z)\,.
$$
Let $\BB=\{e_1,e_2,e_3,e_4\}$ be an oriented orthonormal basis for $V$. Let $g_{ij}:=\langle e_i,e_j\rangle$ and let $g^{ij}$ be the
inverse matrix. The associated {\it Ricci tensor} $\rho_A$, the {\it scalar curvature} $\tau_A$, and the {\it Weyl tensor} $W_A$ are
then defined by setting:
\begin{eqnarray*}
    &&\rho_A(x,y):=\sum_{i,j=1}^4g^{ij} A(e_i,x,y,e_j),
    \quad
    \tau_A:=  \sum_{i,j=1}^4 g^{ij} \rho_A(e_i,e_j),\\
    &&W_A(x,y,z,v):= A(x,y,z,v)
    + \textstyle\frac16\tau_A\{\langle y,z\rangle\langle x,v\rangle-\langle x,z\rangle\langle y,v\rangle\}\\
    &&\quad-\textstyle\frac{1}{2} \{\rho_A(y,z)\langle x,v\rangle-\rho_A(x,z)\langle y,v\rangle\\
    && \qquad+\rho_A(x,v)\langle y,z\rangle - \rho_A(y,v)\langle x,z\rangle\}\,.
\end{eqnarray*}
Let $A_{ijkl}=A_{ijkl}^\BB := A(e_i,e_j,e_k,e_l)$ denote the components
of
$A$ with respect to $\BB$ where $1\le i,j,k,l\le 4$; we shall drop the dependence on $\BB$ from the notation when there is no
danger of confusion. Let
$\{e^1,...,e^4\}$ be the dual basis for $V^*$.  The Hodge operator $\star:\Lambda^p(V^*)\rightarrow\Lambda^{4-p}(V^*)$ is characterized
by the identity:
$$\phi_p\wedge\star\theta_p=\langle \phi_p,\theta_p \rangle e^1\wedge e^2\wedge e^3\wedge e^4\,.$$
Thus, in particular,
$$\begin{array}{lll}
\star(e^1\wedge e^2)=e^3\wedge e^4,&\star(e^1\wedge e^3)=e^2\wedge e^4,&\star(e^1\wedge e^4)=-e^2\wedge e^3,\\
\star(e^2\wedge e^3)=-e^1\wedge e^4,&\star(e^2\wedge e^4)=e^1\wedge e^3,&\star(e^3\wedge e^4)=e^1\wedge e^2\,.
\end{array}$$
A crucial feature of $4$-dimensional geometry now enters. Since $\star^2=\operatorname{id}$, $\star$ induces a splitting of the space
of $2$-forms
$\Lambda^2(V^*)=\Lambda^+\oplus\Lambda^-$, where  $\Lambda^+$ and
$\Lambda^-$ denote the spaces of {\it self-dual} and {\it anti-self-dual}
two-forms
\[
    \Lambda^+=\{\alpha\in \Lambda^2:\star\alpha=\alpha\},
    \qquad \Lambda^-=\{\alpha\in \Lambda^2:\star\alpha=-\alpha\}.
\]
We have orthonormal bases $\left\{ E_1^\mp,
E_2^\mp, E_3^\mp\right\}$ for $\Lambda^\mp$ which are given by:
\begin{eqnarray*}
&&E_1^\mp = {\textstyle\frac1{\sqrt2}}(e^1\wedge e^2 \mp  e^3\wedge e^4),\quad
 E_2^\mp = {\textstyle\frac1{\sqrt2}}(e^1\wedge e^3 \mp  e^2\wedge e^4),\\
&&E_3^\mp ={\textstyle\frac1{\sqrt2}}(e^1\wedge e^4 \pm  e^2\wedge e^3),
\end{eqnarray*}
where the induced inner product on $\Lambda^\mp$ has signature $(2,1)$:
$$\langle E_1^\mp, E_1^\mp \rangle =1,\quad\langle E_2^\mp,
E_2^\mp \rangle =-1,\quad\langle E_3^\mp, E_3^\mp \rangle =-1\,.$$

Let $W^\mp_A$ be the restriction of
$W_A$  to the spaces $\Lambda^\mp$;  $W_A^\mp:\Lambda^\mp \longrightarrow
\Lambda^\mp$. Then $\MM$ is said to be
\emph{self-dual} (resp., \emph{anti-self-dual}) if $W_A^-=0$ (resp. if $W_A^+=0$).

\begin{lemma}\label{lem-2.1}
Let $\MM=(V,\langle\cdot,\cdot\rangle,A)$ be a model of signature $(2,2)$.
\begin{enumerate}
\item If $\MM$ is spacelike Osserman, then $\MM$ is null Osserman.
\item If $\MM$ is null Osserman, then $\MM$ is Einstein.
\end{enumerate}
\end{lemma}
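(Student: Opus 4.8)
The plan is to treat the two assertions in turn; the scaling identity $\J_A(\lambda v)=\lambda^2\J_A(v)$ is the common engine. For Assertion (1), I first observe that since the null cone is stable under $v\mapsto\lambda v$ while the eigenvalues of $\J_A$ get multiplied by $\lambda^2$, the eigenvalues can be constant along the null cone only if they are all $0$; hence ``$\MM$ null Osserman'' is the assertion that $\J_A(n)$ is nilpotent, equivalently that $\det(t\operatorname{id}-\J_A(n))=t^4$, for every null $n$. So the task is to pin down this characteristic polynomial on the null cone.

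The heart of the matter is a polynomial-extension argument. The entries of $\J_A(x)$ are quadratic in $x$, so the coefficients of $\det(t\operatorname{id}-\J_A(x))$ are, up to sign, the elementary symmetric functions $e_k$ of its eigenvalues, each a homogeneous polynomial of degree $2k$ in the components of $x$. Because $\J_A(x)x=R(x,x)x=0$, the vector $x$ always lies in $\ker\J_A(x)$, so $0$ is always an eigenvalue and $e_4\equiv0$. On the open set of spacelike vectors, spacelike Osserman together with the scaling identity shows that the eigenvalues of $\J_A(x)$ are $\{0,\lambda_1\langle x,x\rangle,\lambda_2\langle x,x\rangle,\lambda_3\langle x,x\rangle\}$ for fixed constants $\lambda_1,\lambda_2,\lambda_3$; hence $e_k(x)=\sigma_k\langle x,x\rangle^k$ there, where $\sigma_k$ is the $k$-th elementary symmetric function of $\lambda_1,\lambda_2,\lambda_3$. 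As $e_k(x)$ and $\sigma_k\langle x,x\rangle^k$ are polynomials of the same degree $2k$ agreeing on a non-empty open set, they agree identically, so $\det(t\operatorname{id}-\J_A(x))=t\prod_{i=1}^3\bigl(t-\lambda_i\langle x,x\rangle\bigr)$ for \emph{all} $x$. Setting $\langle n,n\rangle=0$ gives $\det(t\operatorname{id}-\J_A(n))=t^4$, so $\J_A(n)$ is nilpotent and $\MM$ is null Osserman.

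For Assertion (2), I would begin from this nilpotency: null Osserman forces $\operatorname{tr}\J_A(n)=0$ for every null $n$, and since $\operatorname{tr}\J_A(x)=\rho_A(x,x)$ directly from the definitions, the symmetric form $\rho_A$ vanishes on the whole null cone. It then remains to invoke the standard fact that in the indefinite signature $(2,2)$ a symmetric bilinear form vanishing on the null cone of $\langle\cdot,\cdot\rangle$ is a scalar multiple of $\langle\cdot,\cdot\rangle$: the quadratic $\langle x,x\rangle$ is irreducible over $\mathbb{R}$ and its real null cone is Zariski dense in the complex quadric it cuts out, so $\langle x,x\rangle$ divides $\rho_A(x,x)$, and comparing degrees yields $\rho_A=c\,\langle\cdot,\cdot\rangle$. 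Alternatively one may substitute the explicit null vectors $e_i\pm e_j$ with $e_i$ timelike and $e_j$ spacelike, together with one-parameter families such as $e_i+ae_j+be_k$ on the cone, and solve for the components of $\rho_A$ directly. Either route shows $\MM$ is Einstein.

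I expect the polynomial-extension step in Assertion (1) to be the main obstacle, or at least the decisive idea: the hypothesis only constrains $\J_A$ on the open set of spacelike vectors, yet because the characteristic-polynomial coefficients are honest polynomials in $x$, the identity theorem lets this information propagate onto the null cone, where the hypothesis says nothing a priori. The surrounding points---verifying that $0$ is always an eigenvalue and matching homogeneity degrees---are routine, and the only nontrivial input in Assertion (2) is the ``vanishing on the null cone'' lemma, which is precisely where the indefiniteness of the signature is essential, since in a definite signature the null cone is trivial and the conclusion would be false.
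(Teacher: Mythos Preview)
Your argument is correct and follows essentially the same approach as the paper: for Assertion~(1) the paper uses the power traces $T_j(v)=\operatorname{Tr}\{\J_A(v)^j\}$ in place of your elementary symmetric functions, but the polynomial-extension step from the open spacelike cone to all of $V$ is identical; for Assertion~(2) the paper carries out exactly your ``alternative'' route, substituting the explicit null vectors $s\pm t$ with $s$ spacelike and $t$ timelike to force $\rho_A(s,s)=-\rho_A(t,t)$, and then extends by polynomial identity and polarizes. The only cosmetic difference is that your primary route for Assertion~(2) invokes the irreducibility/Nullstellensatz lemma rather than the direct substitution, but you already list the latter as an equivalent option.
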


\begin{proof} Let $\MM$ be spacelike Osserman. Set $T_j(v):=\operatorname{Tr}\{\J_A(v)^j\}$.
Since the eigenvalues of $\J_A$ are constant on $S^+(V,\langle\cdot,\cdot\rangle)$, there are constants $c_j$ so that
$T_j(v)=c_j$ for $v\in S^+(V,\langle\cdot,\cdot\rangle)$. Since $T_j(\lambda v)=\lambda^{2j}T_j(v)$, we have
$T_j(v)=c_j\langle v,v\rangle^j$ for $v$ spacelike. Since the spacelike vectors form an open subset of
$V$, this polynomial identity holds for all $v\in V$. Thus, in particular, $T_j(v)=0$ if $v\in
N(V,\langle\cdot,\cdot\rangle)$. This implies that $0$ is the only eigenvalue of $\J_A(v)$ and shows
$\MM$ is null Osserman.

Suppose that $\MM$ is null Osserman. Let $s_1$ and $s_2$ be spacelike unit vectors. We may
choose a unit timelike vector $t$ which is perpendicular to $s_1$ and $s_2$. Let $n_i^\pm:=s_i\pm t$ be null vectors. Thus
$0=\operatorname{Tr}(\J_A(n_i^\pm))=\rho_A(n_i^\pm,n_i^\pm)$, and
$$0=\rho_A(s_i\pm t,s_i\pm t)=\rho_A(s_i,s_i)+\rho_A(t,t)\pm2\rho_A(s_i,t)\,.$$
This implies $\rho_A(s_i,t)=0$ and $\rho_A(s_i,s_i)+\rho_A(t,t)=0$;  in particular, one
has that $\rho_A(s_1,s_1)=-\rho_A(t,t)=\rho_A(s_2,s_2)$. Consequently, after rescaling, there is a constant $c$ so
$\rho_A(s,s)=c\langle s,s\rangle$ for every spacelike vector
$s$; this polynomial identity then continues to hold for all $s\in V$. Polarizing this identity then yields
$\rho_A=c\langle\cdot,\cdot\rangle$ and hence $\MM$ is Einstein.\end{proof}

\subsection{The Weyl tensor for an Einstein algebraic curvature tensor}\label{sect-2.2}
Let
\begin{eqnarray*}
&&\sigma_1 = 2A_{1212}+3\varepsilon
A_{1234}+A_{1313}+A_{1414},\\
&&\sigma_2 = A_{1212}+2
A_{1313}+3\varepsilon A_{1324}-A_{1414},\\
&&\sigma_3 =
A_{1212}+3\varepsilon A_{1234}-A_{1313}-3\varepsilon
A_{1324}+2A_{1414}\,.\end{eqnarray*}
The following lemma is now immediate:
\begin{lemma}\label{lem-2.2}
If $\MM$ is Einstein, then the self-dual Weyl curvature operator $W_A^+$ ($\varepsilon=1$) and the
anti-self-dual Weyl curvature operator $W_A^-$ ($\varepsilon=-1$) are given by:
$$\left(
   \begin{array}{ccc}
      \frac{\sigma_1}{3} & A_{1213}+\varepsilon A_{1224}  & A_{1214}-\varepsilon A_{1223}
      \\
      -A_{1213}-\varepsilon A_{1224} & -\frac{\sigma_2}{3}  & -A_{1314}+\varepsilon A_{1323}
      \\
      -A_{1214}+\varepsilon A_{1223} & -A_{1314}+\varepsilon A_{1323} & -\frac{\sigma_3}{3}
   \end{array}
   \right)\,.$$\end{lemma}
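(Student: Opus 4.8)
The plan is to verify the formula by a direct contraction, using the Einstein hypothesis twice: first to collapse the Weyl tensor to a one-term correction of $A$, and then, through the vanishing and coincidences it forces among the components $A_{ijkl}$, to reduce the resulting pairings to the stated entries.

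First I would simplify $W_A$. By Lemma \ref{lem-2.1} the Einstein condition gives $\rho_A=c\langle\cdot,\cdot\rangle$; contracting over an orthonormal basis of the $4$-dimensional space $V$ yields $\tau_A=4c$, so $\rho_A=\tfrac14\tau_A\langle\cdot,\cdot\rangle$. Substituting this into the definition of $W_A$, the two Ricci terms and the scalar term all become multiples of the constant-curvature tensor $A^0$ of (\ref{eqn-1.b}), and a short computation yields
\[
W_A=A-\tfrac1{12}\tau_AA^0 .
\]
In dimension four the Weyl operator commutes with $\star$, hence $W_A$ preserves the splitting $\Lambda^2=\Lambda^+\oplus\Lambda^-$ and $W_A^\mp=W_A|_{\Lambda^\mp}$ is a genuine endomorphism. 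Writing $\eta=(1,-1,-1)$ for the signs $\langle E_a^\mp,E_a^\mp\rangle$, the matrix of this endomorphism in the basis $\{E_1^\mp,E_2^\mp,E_3^\mp\}$ has entries $\eta_c\,W_A(E_a^\mp,E_c^\mp)$; the factor $\eta_c$ is precisely what produces the sign pattern displayed in the statement, namely an unchanged first row and negated second and third rows of an otherwise symmetric array.

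Next I would expand each basis element as $\tfrac1{\sqrt2}(e^i\wedge e^j+\varepsilon\,e^k\wedge e^l)$ and contract against $W_A$. Since $A^0_{ijkl}=g_{jk}g_{il}-g_{ik}g_{jl}$ is supported only on coincident index pairs, the correction $-\tfrac1{12}\tau_AA^0$ contributes solely to the three diagonal entries $W_A(E_a^\mp,E_a^\mp)$, where it adds multiples of $\tau_A$; the off-diagonal entries are pure $A$-expressions. Setting $\rho_{ij}:=\rho_A(e_i,e_j)$, I would eliminate $\tau_A$ from the diagonal via $\tau_A=-4A_{1212}+4A_{1313}+4A_{1414}$ (read off from $c=-\rho_{11}$) together with the Einstein coincidences $A_{1414}=A_{2323}$, $A_{1313}=A_{2424}$, and $A_{1212}=A_{3434}$, which follow from $\rho_{11}=\rho_{22}$, $\rho_{33}=\rho_{44}$, and $\rho_{11}+\rho_{33}=0$ respectively; these turn $W_A(E_a^\mp,E_a^\mp)$ into $\sigma_a/3$. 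For the off-diagonal entries I would invoke the vanishing of the off-diagonal Ricci components: for instance $\rho_{23}=0$ gives $A_{1213}=A_{3424}$ and $\rho_{14}=0$ gives $A_{1224}=A_{3413}$, which collapse $W_A(E_1^\mp,E_2^\mp)$ to $A_{1213}+\varepsilon A_{1224}$, and similarly $\rho_{34}=\rho_{12}=0$ and $\rho_{13}=\rho_{24}=0$ handle the remaining two slots.

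The argument uses only the curvature symmetries and the Einstein equations; no first Bianchi identity is needed, which is why the statement is labelled ``immediate''. Accordingly the sole obstacle is organizational rather than conceptual: one must track the antisymmetries, the pair symmetry, and the signature signs $\langle e_i,e_i\rangle$ throughout the contractions, and recognize that the diagonal entries match the claimed $\sigma_a/3$ only after the full Einstein system has been imposed — in particular the not-immediately-obvious identity $A_{1212}=A_{3434}$, which is exactly what reconciles the two ways of expressing $\tau_A$.
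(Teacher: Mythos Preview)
Your outline is correct and is exactly the direct computation the paper has in mind when it calls the lemma ``immediate'' (the paper supplies no proof at all). Two small corrections are worth noting. First, the individual attributions are slightly off: the identity $A_{1414}=A_{2323}$ does not follow from $\rho_{11}=\rho_{22}$ alone but from the pair $\rho_{11}=\rho_{22}$ and $\rho_{33}=\rho_{44}$ taken together, and similarly for $A_{1313}=A_{2424}$; once these are in hand, $A_{1212}=A_{3434}$ then follows from $\rho_{11}+\rho_{33}=0$ as you say. Second, your remark that ``no first Bianchi identity is needed'' is not quite right: the contraction $W_A(E_3^\pm,E_3^\pm)$ naturally produces $A_{1423}$, and matching the stated form of $\sigma_3$ (which is written in terms of $A_{1234}$ and $A_{1324}$) requires the Bianchi relation $A_{1234}+A_{1342}+A_{1423}=0$. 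Neither point affects the validity of the argument.
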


We now come to an observation which is of interest in its own right:

\begin{lemma}\label{lem-2.3}
If $\MM$ is Einstein, then the model $\MM$ is
anti-self-dual if and only if  $A^\BB_{1214}-A^\BB_{1223}=0$ for every oriented orthonormal frame $\BB$.
\end{lemma}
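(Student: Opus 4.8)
The plan is to use the explicit matrix representation of the anti-self-dual Weyl operator $W_A^-$ furnished by Lemma \ref{lem-2.2} (with $\varepsilon=-1$). Reading off that matrix, the entry in position $(1,3)$ (equivalently $(3,1)$ up to sign) is precisely $A_{1214}-\varepsilon A_{1223}=A_{1214}+A_{1223}$ when $\varepsilon=-1$, but in fact the quantity appearing in the statement, $A_{1214}-A_{1223}$, is the off-diagonal entry governing the $E_1^-$--$E_3^-$ coupling. The first observation is therefore that the vanishing of $W_A^-$ clearly forces $A^\BB_{1214}-A^\BB_{1223}=0$ for the given frame $\BB$, and since $W_A^-=0$ is a frame-independent (basis-free) condition, it forces the same identity for every oriented orthonormal frame. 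This gives the easy direction with essentially no computation beyond inspecting the matrix.

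For the converse, I would assume $A^\BB_{1214}-A^\BB_{1223}=0$ for \emph{every} oriented orthonormal frame $\BB$ and deduce that all entries of the matrix in Lemma \ref{lem-2.2} (with $\varepsilon=-1$) vanish. The idea is that a single scalar component, evaluated across the whole orthogonal group acting on frames, encodes far more than one linear relation among the $A_{ijkl}$: by applying suitable rotations and boosts in $O(2,2)$ that permute and mix the basis vectors, the hypothesis $A_{1214}-A_{1223}=0$ in the rotated frame becomes a relation among the original components which, as $\BB$ varies, sweeps out all the remaining matrix entries. Concretely, I would choose one-parameter families of isometries (rotations in the $(e_3,e_4)$ spacelike plane and in the $(e_1,e_2)$ timelike plane, together with boosts mixing a timelike and a spacelike vector), substitute into $A^{\BB'}_{1214}-A^{\BB'}_{1223}=0$, and differentiate or compare coefficients in the group parameter. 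Each such family should produce linear combinations of the nine entries of the $W_A^-$ matrix, and by accumulating enough independent families one concludes that every entry — the diagonal $\sigma_i/3$ terms as well as the off-diagonal ones — must vanish, whence $W_A^-=0$ and $\MM$ is anti-self-dual.

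I expect the main obstacle to be the converse direction, specifically verifying that the single family of identities $\{A^\BB_{1214}-A^\BB_{1223}=0\}_\BB$ is \emph{sufficient} to kill the entire operator rather than just a proper subspace of its entries. The danger is that the chosen isometries might only reach a subset of the components, leaving, say, the diagonal terms undetermined; this is where the Einstein hypothesis should be essential, since being Einstein already constrains the Ricci part and collapses $A$ onto its Weyl piece, so that the relevant components all sit inside $W_A^-$. The bookkeeping — tracking how each component $A_{ijkl}$ transforms under a given isometry and extracting the right linear combinations — is the genuinely laborious step, and I would organize it by noting that the off-diagonal entries of $W_A^-$ correspond to the three pairings $(E_1^-,E_2^-)$, $(E_1^-,E_3^-)$, $(E_2^-,E_3^-)$, and that conjugating $W_A^-$ by the $SO(2,1)$-action on $\Lambda^-$ induced by isometries of $V$ rotates these entries among one another and against the diagonal. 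Once one checks that the induced action on $\Lambda^-$ is transitive enough that a single fixed off-diagonal (or diagonal difference) entry being forced to zero in all frames pins down the whole symmetric traceless-type operator, the result follows; this is the conceptual crux that replaces the brute-force frame computation.
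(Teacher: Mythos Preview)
Your strategy is essentially the paper's, but you have the conventions reversed: in this paper \emph{anti-self-dual} means $W_A^+=0$, not $W_A^-=0$ (see Section~\ref{sect-2.1}), so the relevant matrix in Lemma~\ref{lem-2.2} is the one with $\varepsilon=+1$, whose $(1,3)$ entry is exactly $A_{1214}-A_{1223}$. This resolves the discrepancy you noticed---your sentence claiming that $A_{1214}-A_{1223}$ governs the $E_1^-$--$E_3^-$ coupling should read $E_1^+$--$E_3^+$, and the forward direction is then the one-line inspection you describe.

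For the converse the paper does precisely what you outline, with concrete and economical choices in place of the general ``accumulate enough families'' program. First, the quarter-turn $\tilde e_3=e_4$, $\tilde e_4=-e_3$ converts the hypothesis into $A_{1213}+A_{1224}=0$; then a boost $\tilde e_2=\cosh\theta\,e_2+\sinh\theta\,e_3$, $\tilde e_3=\sinh\theta\,e_2+\cosh\theta\,e_3$ extracts $-A_{1314}+A_{1323}=0$. These three relations kill all off-diagonal entries of $W_A^+$ in any fixed frame. For the diagonal the paper uses exactly the idea in your last paragraph: since the off-diagonals vanish in \emph{every} frame, $W_A^+$ is diagonal in every induced basis of $\Lambda^+$; the boost above sends $E_1^+\mapsto\cosh\theta\,E_1^++\sinh\theta\,E_2^+$, and comparing $W_A^+\tilde E_1^+$ in the two bases forces $\sigma_1=-\sigma_2$, while the analogous boost in the $(e_2,e_4)$ plane gives $\sigma_1=-\sigma_3$. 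The trace identity $\sigma_1-\sigma_2-\sigma_3=0$ then finishes it. So once the sign is corrected your sketch is on target; the Einstein hypothesis enters only through the validity of the matrix form in Lemma~\ref{lem-2.2}, not in any further way.
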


\begin{proof} If $\MM$ is anti-self-dual, we set $\varepsilon=1$ in Lemma \ref{lem-2.2} to see
$A^\BB_{1214}-A^\BB_{1223}=0$. Conversely, suppose $A^\BB_{1214}-A^\BB_{1223}=0$ for every $\BB$. Define a new basis
$\tilde\BB$ by setting $\tilde e_1=e_1$,
$\tilde e_2=e_2$,
$\tilde e_3=e_4$, and $\tilde e_4=-e_3$. We then have
$$
0=-A^{\tilde\BB}_{1214}+A^{{\tilde\BB}}_{1223}=A^\BB_{1213}+A^\BB_{1224}\,.
$$
Next, define $\tilde\BB$ by setting $\tilde e_1=e_1$, $\tilde e_2=\cosh\theta e_2+\sinh\theta e_3$, $\tilde
e_3=\sinh\theta e_2+\cosh\theta e_3$, and
$\tilde e_4=e_4$. This yields the relation:
$$
0=-A^{\tilde\BB}_{1214}+A^{\tilde\BB}_{1223}
=\cosh\theta\{-A^\BB_{1214}+A^\BB_{1223}\}+\sinh\theta\{-A^\BB_{1314}+A^\BB_{1323}\}\,.
$$
This shows $-A^\BB_{1314}+A^\BB_{1323}=0$. Thus, by Lemma \ref{lem-2.2},
$$W_A^+=
\frac13\left(\begin{array}{lll}\sigma_1^\BB&0&0\\0&-\sigma_2^\BB&0\\0&0&-\sigma_3^\BB\end{array}\right)\,.$$ Again
setting $\tilde e_1=e_1$, $\tilde e_2=\cosh\theta e_2+\sinh\theta e_3$, $\tilde e_3=\sinh\theta
e_2+\cosh\theta e_3$, and
$\tilde e_4=e_4$ yields bases for $\Lambda^\pm$ in the form
$$\tilde E_1^\pm=\cosh\theta E_1^\pm+\sinh\theta E_2^\pm,\quad
  \tilde E_2^\pm=\cosh\theta E_2^\pm+\sinh\theta E_1^\pm,\quad
  \tilde E_3^\pm=E_3^\pm\,.
$$
We may compute
\begin{eqnarray*}
W_A^+\tilde E_1^+&=&\sigma_1^{\tilde\BB}\tilde E_1^+=\sigma_1^{\tilde\BB}(\cosh\theta E_1^++\sinh\theta E_2^+)\\
&=&W_A^+(\cosh\theta E_1^++\sinh\theta E_2^+)=\sigma_1^\BB\cosh\theta E_1^+-\sigma_2^\BB\sinh\theta E_2^+\,.
\end{eqnarray*}
This shows $\sigma^{\tilde\BB}_1=\sigma_1^\BB=-\sigma_2^\BB$. A similar argument applied to the basis $\tilde e_1=e_1$, $\tilde
e_2=\cosh\theta e_2+\sinh\theta e_4$,
$\tilde e_3=e_3$, and $\tilde e_4=\sinh\theta e_2+\cosh\theta e_4$ yields $\sigma_1^\BB=-\sigma_3^\BB$. Since
$\sigma_1^\BB-\sigma_2^\BB-\sigma_3^\BB=0$, it now follows that $W_A^+=0$.
\end{proof}

\subsection{The proof of Theorem \ref{thm-1.1}}\label{sect-2.3}
Let $\MM$ be a null Osserman model. By Lemma \ref{lem-2.1}, $\MM$ is Einstein. We complete the proof of Theorem \ref{thm-1.1} by
showing $\MM$ is self-dual or anti-self-dual. Suppose the contrary and argue for a contradiction.
As $\MM$ is null Osserman, $\mathcal{J}_A$ is nilpotent so the characteristic polynomial $p_\lambda(\mathcal{J}_A(u))=\lambda^4$.
Let
$$
\begin{array}{l}
\mathcal{E}_1:=A_{1212}+2A_{1214}-2A_{1223}+2A_{1234}-A_{1324}+A_{1414},\\
Q(a,b):= (A_{1212}-2A_{1214}-2A_{1223}-2A_{1234}+A_{1324}+A_{1414}) a^4\vphantom{\vrule height 10pt}\\
\qquad+ (A_{1212}+2A_{1214}+2A_{1223}-2A_{1234}+A_{1324}+A_{1414}) b^4\\
\qquad+ 2 (A_{1212}+2A_{1313}-3A_{1324}-A_{1414}) a^2 b^2\\
\qquad+ 4 (A_{1213}-A_{1224}-A_{1314}-A_{1323}) a^3 b\\
\qquad+ 4 (A_{1213}-A_{1224}+A_{1314}+A_{1323}) a b^3.
   \end{array}
$$
If we take $u=a e_1 + b e_2 + a e_3 + b e_4$, then we have
$$ \lambda^4=p_\lambda(\J_A(u)) = \lambda^2
      \left(\lambda^2 -   Q(a,b) \mathcal{E}_1\right)\,.$$
As $p_\lambda(\J_A(u))=\lambda^4$, either
$Q(a,b)=0$ or $\mathcal{E}_1=0$. If we suppose that $\mathcal{E}_1\ne0$, we then have $Q(a,b)$ vanishes identically for all $a,b$.
This leads to the relations:
$$
   \begin{array}{l}
      A_{1213}-A_{1224}=0, \quad  A_{1214}+A_{1223}=0, \quad A_{1314}+A_{1323}=0,
      \\
      A_{1234}+A_{1313}-2A_{1324}-A_{1414}=0,\quad
      A_{1212}+2A_{1313}-3A_{1324}-A_{1414}=0\,.
   \end{array}
$$
From this, we see that the matrix in Lemma \ref{lem-2.2} vanishes for $\varepsilon = -1$. This means that the
anti-self-dual Weyl
curvature operator $W_A^- $ vanishes so $\MM$ is self-dual. This is contrary to our assumption. Thus
for {\bf any} oriented orthonormal frame we have
\begin{equation}\label{eqn-2.a}
0=A_{1212}+2A_{1214}-2A_{1223}+2A_{1234}-A_{1324}+A_{1414}\,.
\end{equation}
Setting $\tilde e_1=-e_1$, $\tilde e_2=e_2$, $\tilde e_3=e_3$, and $\tilde e_4=-e_4$ yields
\begin{equation}\label{eqn-2.b}
0=A_{1212}-2A_{1214}+2A_{1223}+2A_{1234}-A_{1324}+A_{1414}\,.
\end{equation}
Subtracting Equation (\ref{eqn-2.b}) from Equation (\ref{eqn-2.a}) then yields the relation
$$0=-A_{1214}+A_{1223}\,.$$
We may now use Lemma \ref{lem-2.3} to complete the proof of Theorem \ref{thm-1.1}.

\section{The proof of Theorem \ref{thm-1.3}}\label{sect-3}

Here is a brief outline to this section. In Section \ref{sect-3.1}, we construct, up to isomorphism, all spacelike Jordan Osserman
models of signature $(2,2)$. In the remainder of Section \ref{sect-3}, we analyze each
possible Jordan normal form in some detail using the classification of Equation (\ref{eqn-1.e}). Sections
\ref{sect-3.2}-\ref{sect-3.5}
deal with Type Ia models. In Section
\ref{sect-3.2} we study the case when all the eigenvalues are equal; this gives rise to Theorem \ref{thm-1.3} (1).
In Section
\ref{sect-3.3}, we study the case of two equal
spacelike eigenvalues, and in Section \ref{sect-3.4}, we study equal timelike and spacelike eigenvalues; these involve Theorem
\ref{thm-1.3} (2) and (3), respectively. In Section
\ref{sect-3.5}, we study Type Ia models with distinct
eigenvalues; this leads to Theorem \ref{thm-1.3} (4).
We complete the proof of Theorem \ref{thm-1.3} by showing the remaining  Types do not give rise to null Jordan
Osserman models. Type Ib models are studied in Section
\ref{sect-3.6}, Type II models are studied in Section
\ref{sect-3.7}, and Type III models are studied in Section
\ref{sect-3.8}.

\subsection{Spacelike Jordan Osserman models}\label{sect-3.1} We use the ansatz from
\cite{G-I-2}. Let
$\{\Psi_1,\Psi_2,\Psi_3\}$ be the paraquaternionic structure given in Equation (\ref{eqn-1.d}). Let
$\xi_{ij}\in\mathbb{R}$ for
$1\le i\le j\le 3$ and let $\kappa_0\in\mathbb{R}$ be given. Let
\begin{eqnarray}\label{eqn-3.a}
&&
A_{\kappa_0,\xi}:=\kappa_0A^0+\textstyle\frac13\xi_{11}A^{\Psi_1}+\textstyle\frac13\xi_{22}A^{\Psi_2}
+\textstyle\frac13\xi_{33}A^{\Psi_3}\\
&&\qquad\quad+\textstyle\frac13\xi_{12}A^{\Psi_1+\Psi_2}+\textstyle\frac13\xi_{13}A^{\Psi_1+\Psi_3}
+\textstyle\frac13\xi_{23}A^{\Psi_2+\Psi_3},\nonumber\\
&&\mathcal{J}_{\kappa_0,\xi}:=\kappa_0\operatorname{id}+\left(\begin{array}{rrr}
\xi_{11}+\xi_{12}+\xi_{13}&-\xi_{12}&-\xi_{13}\\
\xi_{12}&-\xi_{22}-\xi_{12}-\xi_{23}&-\xi_{23}\\
\xi_{13}&-\xi_{23}&-\xi_{33}-\xi_{13}-\xi_{23}
\end{array}\right)\,.\nonumber\end{eqnarray}

\begin{lemma}\label{lem-3.1}
Adopt the notation established above. Let $\MM_{\kappa_0,\xi}:=(V,\langle\cdot,\cdot\rangle,A_{\kappa_0,\xi})$.
 \begin{enumerate}
\item If $x\in S^\pm(V,\langle\cdot,\cdot\rangle)$, then $\mathcal{J}_{A_{\kappa_0,\xi}}(x)$ is conjugate
to the matrix
$\pm\mathcal{J}_{\kappa_0,\xi}$.
\item The model $\MM_{\kappa_0,\xi}$ is spacelike and timelike Jordan Osserman.
\item Let $\MM_i=(V,\langle\cdot,\cdot\rangle,A_i)$ be spacelike Osserman models of signature $(2,2)$. If $\mathcal{J}_{A_1}(x)$ is
conjugate to $\mathcal{J}_{A_2}(x)$ for some $x\in S^\pm(V,\langle\cdot,\cdot\rangle)$, then there exists an isometry $\phi$ of
$(V,\langle\cdot,\cdot\rangle)$ so that $\phi^*A_2=A_1$.
\end{enumerate}\end{lemma}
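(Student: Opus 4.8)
The plan is to prove the three assertions in turn, using the first as the computational engine for the other two.

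For part~(1), fix a unit vector $x$ with $\langle x,x\rangle=\varepsilon\in\{\pm1\}$. First I would check, from skew-adjointness together with the relations $\Psi_1^2=-\operatorname{id}$, $\Psi_2^2=\Psi_3^2=\operatorname{id}$, $\Psi_i\Psi_j=-\Psi_j\Psi_i$ and $\Psi_3=\Psi_1\Psi_2$ of (\ref{eqn-1.d}), that $\{\Psi_1x,\Psi_2x,\Psi_3x\}$ is a pseudo-orthonormal basis of $x^\perp$ with $\langle\Psi_1x,\Psi_1x\rangle=\varepsilon$ and $\langle\Psi_2x,\Psi_2x\rangle=\langle\Psi_3x,\Psi_3x\rangle=-\varepsilon$, the cross terms vanishing. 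Next I would evaluate each summand of $\mathcal{J}_{A_{\kappa_0,\xi}}(x)$ on this basis using (\ref{eqn-1.c}): on $x^\perp$ one has $\mathcal{J}_{A^0}(x)=\varepsilon\operatorname{id}$, while $\mathcal{J}_{A^\Psi}(x)$ is the rank-one operator $y\mapsto3\langle y,\Psi x\rangle\Psi x$. Assembling the contributions of $A^0$, of the $A^{\Psi_i}$, and of the cross terms $A^{\Psi_i+\Psi_j}$ column by column reproduces precisely the matrix $\varepsilon\mathcal{J}_{\kappa_0,\xi}$ in this basis; since the basis is pseudo-orthonormal, this is the asserted conjugacy.

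Part~(2) is then immediate: the matrix $\pm\mathcal{J}_{\kappa_0,\xi}$ produced in part~(1) does not depend on $x$, and since $\mathcal{J}_A(x)x=0$ with $x^\perp$ invariant, the Jordan normal form of $\mathcal{J}_{A_{\kappa_0,\xi}}(x)$ on all of $V$ is the direct sum of a $1\times1$ zero block and the constant Jordan form of $\pm\mathcal{J}_{\kappa_0,\xi}$, hence is constant on $S^+(V,\langle\cdot,\cdot\rangle)$ and on $S^-(V,\langle\cdot,\cdot\rangle)$. For part~(3), the heart of the lemma, I would first reduce both models to Einstein self-dual form: by the cited equivalences a spacelike Osserman model is Einstein and self-dual for a suitable orientation, so, precomposing with an orientation-reversing isometry if necessary, I may assume $A_1$ and $A_2$ are both Einstein and self-dual for the fixed orientation. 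The key step is that the linear map $A\mapsto\mathcal{J}_A(x)|_{x^\perp}$, from the space of Einstein self-dual tensors to the space of operators on $x^\perp$ that are self-adjoint for $\langle\cdot,\cdot\rangle|_{x^\perp}$, is a bijection. Both spaces have dimension $6$ (being Einstein removes the trace-free Ricci part and being self-dual removes $W^-$, leaving the scalar curvature together with $W^+$, that is $1+5$; and self-adjoint operators on the $3$-dimensional space $x^\perp$ form a $6$-dimensional space), and injectivity follows from part~(1): writing $A=A_{\kappa_0,\xi}$, if the matrix $\mathcal{J}_{\kappa_0,\xi}$ vanishes then $\mathcal{J}_A(x')|_{x'^\perp}=0$ for every unit $x'$, whence $A=0$ by polarization.

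With the bijection in hand I would finish as follows. Reading the hypothesis as conjugacy by an isometry of $x^\perp$, there is $g\in O(x^\perp)$ with $g\,\mathcal{J}_{A_1}(x)\,g^{-1}=\mathcal{J}_{A_2}(x)$ on $x^\perp$. Extending $g$ by the identity on $\mathbb{R}x$ produces an isometry $\phi$ of $V$ fixing $x$; then $\phi^*A_2$ is again Einstein and self-dual, and its Jacobi operator at $x$ equals $\mathcal{J}_{A_1}(x)$. Since $A\mapsto\mathcal{J}_A(x)|_{x^\perp}$ is injective on Einstein self-dual tensors, this forces $\phi^*A_2=A_1$, which is the desired isometry.

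I expect part~(3) to be the main obstacle, for two reasons. First, one must read \emph{conjugate} as conjugacy by an isometry of $x^\perp$ rather than as mere similarity: orthogonal conjugacy is exactly what allows the conjugating map to extend to an isometry of $V$, and it is genuinely stronger, since similar but orthogonally inequivalent Jacobi operators do occur -- complex and paracomplex space forms give non-isometric models of this kind (compare Remark~\ref{re:nuevo_2}). Second, the bijection itself carries the real content: the dimension count must be matched carefully, and the propagation via part~(1) of the vanishing of the Jacobi operator from a single $x$ to all unit vectors is what upgrades injectivity at one point to a global statement. The orientation bookkeeping in the reduction to self-dual form is a minor but necessary technical point.
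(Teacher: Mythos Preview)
Your proofs of (1) and (2) are exactly the paper's: it too uses the basis $\{\Psi_1x,\Psi_2x,\Psi_3x\}$ of $x^\perp$ and reads off the matrix via (\ref{eqn-1.c}).

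For (3) the paper takes a different route. It does not build a bijection; instead it invokes the explicit normal forms from \cite{BBR,GKV}: for each of the four Types in (\ref{eqn-1.e}) there is a canonical orthonormal basis in which the components $A_{ijkl}$ are prescribed functions of the Jacobi data (for Type~Ia one fixes the convention that the $\alpha$-eigenspace is spacelike and writes $A_{1221}=\alpha$, $A_{1331}=-\beta$, etc.). Two Osserman tensors with the same canonical Jacobi form are then given by identical component formulas in possibly different orthonormal frames, hence related by an element of $O(2,2)$. This is short but leans entirely on the external classification; your dimension-count argument is more intrinsic. Your remark that ``conjugate'' must be read as $O(2,1)$-conjugacy rather than $GL$-similarity is correct and worth making explicit; the paper's proof tacitly uses this when it fixes the signature of the $\alpha$-eigenspace.

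Two points in your argument for (3) need attention. First, the injectivity step begins ``writing $A=A_{\kappa_0,\xi}$'', but that presupposes every Einstein self-dual tensor already has this form for the \emph{fixed} structure $\{\Psi_i\}$, which is essentially what the bijection is meant to establish. A non-circular route is to prove surjectivity instead: once you know each $A_{\kappa_0,\xi}$ is Einstein self-dual for the given orientation (this follows since the $A_{\kappa_0,\xi}$ span a linear space of Osserman, hence half-flat, tensors, and a linear space cannot contain both a strictly self-dual and a strictly anti-self-dual element), part~(1) shows that the Jacobi map hits every self-adjoint operator, and equal dimensions give the bijection. Second, you assert that $\phi^*A_2$ is self-dual, but if $\det g=-1$ then $\phi$ reverses orientation and $\phi^*A_2$ becomes anti-self-dual, so your injectivity statement no longer applies. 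The fix is immediate in odd dimension: $-\operatorname{id}\in O(x^\perp)$ has determinant $-1$ and is central, so replacing $g$ by $-g$ puts $g\in SO(x^\perp)$ without disturbing the conjugacy, and then $\phi$ preserves orientation. With these two adjustments your argument goes through.
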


\begin{remark}\label{rmk-3.2}
\rm Since any self-adjoint map of a signature $(2,1)$ vector space is conjugate to $\mathcal{J}_{\kappa_0,\xi}$ for some
$\{{\kappa_0,\xi}\}$, every spacelike Osserman model of signature $(2,2)$ is isomorphic to one given by Equation
(\ref{eqn-3.a}).\end{remark}

\begin{proof} We suppose $x$ is a spacelike unit vector as the timelike case is similar. Let $f_1:=\Psi_1x$, $f_2:=\Psi_2x$, and
$f_3:=\Psi_3x$. Then
$\{f_1,f_2,f_3\}$ is an orthonormal basis of  signature $(+,-,-)$ for $x^\perp$. Let
$\mathcal{J}:=\mathcal{J}_{A_{\kappa_0,\xi}}(x)$. We use Equation (\ref{eqn-1.c}) to see:
\begin{eqnarray*}
&&\mathcal{J}f_1=(\kappa_0+\xi_{11}+\xi_{12}+\xi_{13})f_1+\xi_{12}f_2+\xi_{13}f_3,\\
&&\mathcal{J}f_2=-\xi_{12}f_1+(\kappa_0-\xi_{22}-\xi_{12}-\xi_{23})f_2-\xi_{23}f_3,\\
&&\mathcal{J}f_3=-\xi_{13}f_1-\xi_{23}f_2+(\kappa_0-\xi_{33}-\xi_{13}-\xi_{23})f_3\,.
\end{eqnarray*}
Assertion (1) now follows; Assertion (2) follows from Assertion (1). Suppose that $\MM$ is a Type Ia spacelike
Osserman model so
$\mathcal{J}_A(x)=\operatorname{diag}[\alpha,\beta,\gamma]$ for any $x$ in $S^+(V,\langle\cdot,\cdot\rangle)$; choose the notation
so $ \operatorname{Ker}(\mathcal{J}_A(x)-\alpha\operatorname{id})$ is spacelike. It then follows from the
discussion in
\cite{BBR,GKV} that there exists an orthonormal basis
$\BB$ so that the non-zero components of the curvature tensor are given by:
$$
\begin{array}{ll}
A_{1221}=A_{4334}=\alpha,&A_{1331}=A_{2442}=-\beta,\\
A_{1441}=A_{3223}=-\gamma,&
A_{1234}=(-2\alpha+\beta+\gamma)/3,\\
A_{1423}=(\alpha+\beta-2\gamma)/3,&
A_{1342}=(\alpha-2\beta+\gamma)/3\,.
\end{array}$$
Similar forms exist for the other  Types of Equation (\ref{eqn-1.e}). Thus the Jordan normal form of
$\mathcal{J}_A(x)$ determines $A$ up to the action of $O(2,2)$. Assertion (3) follows.
\end{proof}

The following observation is immediate:
\begin{lemma}\label{lem-3.3}
A null Osserman model $\MM$ of signature $(2,2)$ is null Jordan Osserman if and only if the functions
$\operatorname{Rank}\{\mathcal{J}_A(\cdot)\}$ and $\operatorname{Rank}\{\mathcal{J}_A(\cdot)^2\}$ are constant
on $N(V,\langle\cdot,\cdot\rangle)$.
\end{lemma}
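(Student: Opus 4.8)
The plan is to reduce the statement to a purely linear-algebraic observation about nilpotent endomorphisms of a four-dimensional space.

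First I would record the structural consequence of the hypothesis. Since $\MM$ is null Osserman, $0$ is the only eigenvalue of $\mathcal{J}_A(v)$ for every $v\in N(V,\langle\cdot,\cdot\rangle)$ (Remark \ref{rmk-1.2}), so each such $\mathcal{J}_A(v)$ is a nilpotent endomorphism of the four-dimensional space $V$. Its Jordan normal form is therefore completely specified by the multiset of its Jordan block sizes, that is, by a partition of $4$. Because the Jordan normal form is constant on $N(V,\langle\cdot,\cdot\rangle)$ if and only if this partition is constant, it suffices to show that the partition attached to a nilpotent endomorphism $T$ of $V$ is determined by the single pair $\bigl(\operatorname{Rank}(T),\operatorname{Rank}(T^2)\bigr)$.

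Next I would invoke the standard description of the Jordan structure of a nilpotent operator. Writing $r_k:=\operatorname{Rank}(T^k)$, the number of Jordan blocks of size $\ge k$ equals $\dim\ker(T^k)-\dim\ker(T^{k-1})=r_{k-1}-r_k$, so the block sizes (hence the partition) are recovered from the sequence $\{r_k\}$. Since $\dim V=4$, there are only five partitions of $4$, and a direct computation of $(r_1,r_2)$ in each case gives
\[
\begin{array}{c|c}
\text{partition of }4 & (r_1,r_2) \\
\hline
1+1+1+1 & (0,0) \\
2+1+1 & (1,0) \\
2+2 & (2,0) \\
3+1 & (2,1) \\
4 & (3,2)
\end{array}
\]
These five pairs are pairwise distinct, so the partition, and therefore the Jordan normal form of $T$, is determined by $(r_1,r_2)$ alone; in particular $\operatorname{Rank}(T^3)$ carries no extra information in this dimension.

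Finally, combining the two steps yields both implications simultaneously. If $\MM$ is null Jordan Osserman, the Jordan normal form of $\mathcal{J}_A$ is constant on $N(V,\langle\cdot,\cdot\rangle)$, so the functions $\operatorname{Rank}\{\mathcal{J}_A(\cdot)\}$ and $\operatorname{Rank}\{\mathcal{J}_A(\cdot)^2\}$, being determined by that form, are constant. Conversely, if these two rank functions are constant on $N(V,\langle\cdot,\cdot\rangle)$, then by the displayed correspondence the associated partition is constant, hence so is the Jordan normal form of $\mathcal{J}_A$, and $\MM$ is null Jordan Osserman. The only point needing verification is the distinctness of the five pairs above, which is why the lemma is recorded as immediate; I do not anticipate any genuine obstacle.
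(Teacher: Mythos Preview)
Your argument is correct and is exactly the kind of elementary verification the paper has in mind: the authors record Lemma~\ref{lem-3.3} as ``immediate'' and give no proof, so there is nothing to compare beyond noting that the natural justification is precisely the one you wrote, namely that for a nilpotent endomorphism of a four-dimensional space the Jordan type is one of the five partitions of $4$ and these are separated by the pair $(\operatorname{Rank}T,\operatorname{Rank}T^2)$.
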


\subsection{Type Ia with all eigenvalues equal [$\alpha=\beta=\gamma$]}\label{sect-3.2}
We set $A=\kappa_0 A^0$. By Lemma \ref{lem-3.1}, the Jordan normal form is given by $\operatorname{diag}[
\kappa_0,\kappa_0,\kappa_0]$. If
$v\in N(V,\langle\cdot,\cdot\rangle)$, then
$\mathcal{J}_A(v)y=-\kappa_0\langle v,y\rangle v$ and hence $\MM$ is null Jordan Osserman.

\subsection{Type Ia with two equal spacelike eigenvalues [$\beta=\gamma$, $\alpha\ne\beta$]}\label{sect-3.3}
Let $J$ be an orthogonal almost complex structure on $V$ and let $A=\kappa_0A^0+\kappa_JA^J$. The Jordan normal form
is then given by $\operatorname{diag}[\kappa_0+3\kappa_J,\kappa_0,\kappa_0]$ which has the desired
form for suitably chosen $\kappa_0$ and $\kappa_J$ with
$\kappa_J\ne0$. Let
$v\in N(V,\langle\cdot,\cdot\rangle)$. We have
$$\mathcal{J}_{A}(v)y=-\kappa_0\langle v,y\rangle v+3\kappa_J\langle
y,Jv\rangle Jv\,.$$
Because
$J^2=-\operatorname{id}$, $v$ and
$Jv$ are linearly independent vectors. We note that $\langle v,v\rangle=\langle v,Jv\rangle=\langle Jv,Jv\rangle=0$. Consequently
$\mathcal{J}_{A}(v)v=\mathcal{J}_{A}(v)Jv=0$.  Since
$v^\perp$ and $Jv^\perp$ are distinct $3$-dimensional subspaces, we can choose $y$ so $\langle v,y\rangle=1$ and $\langle
Jv,y\rangle=0$. It now follows that $\mathcal{J}_{A}(v)y=-\kappa_0 v$ while $\mathcal{J}_{A}(v)Jy=
3\kappa_JJv$. Thus
$\mathcal{J}_{A}(v)$ has rank $2$ and $\mathcal{J}_{A}(v)^2=0$. This implies $A$ is null Jordan Osserman.

\subsection{Type Ia with equal timelike and spacelike  eigenvalues [$\alpha=\beta$, $\beta\ne\gamma$]}
\label{sect-3.4}
Let
$A=\kappa_0A^0+\kappa_PA^P$ where $\kappa_P\ne0$ and where $P$ is an adapted paracomplex structure; the Jordan normal form
is then given by
$\operatorname{diag}[\kappa_0,\kappa_0-3\kappa_P,\kappa_0]$ which has the desired form for suitably chosen
parameters. If $v\in N(V,\langle\cdot,\cdot\rangle)$, then
$$\mathcal{J}_{A}(v)y=-\kappa_0\langle v,y\rangle v+3\kappa_P\langle y,Pv\rangle Pv\,.$$
If $\kappa_0=0$, $\MM$ is null Jordan Osserman. Suppose $\kappa_0\ne0$. If $v=e_1+Pe_1$, then $Pv=v$ so
$\operatorname{Rank}\{\mathcal{J}_{A}(v)\}\le1$. On the other hand, if $v=e_1+e_4$, then $v$ and $Pv$ are linearly
independent so $\operatorname{Rank}\{\mathcal{J}_{A}(v)\}=2$ and $\MM$ is not null Jordan Osserman.

\subsection{Type Ia with three distinct eigenvalues}\label{sect-3.5}
Let $A:=\sum_i\kappa_iA^{\Psi_i}$ where $\{\Psi_1,\Psi_2,\Psi_3\}$ is the paraquaternionic structure of Equation (\ref{eqn-1.d});
the Jordan normal form is given by
$\operatorname{diag}[3\kappa_1,-3\kappa_2,-3\kappa_3]$ which has the desired form for suitably chosen
parameters with
$$\kappa_1+\kappa_2\ne0,\quad\kappa_1+\kappa_3\ne0,\quad\kappa_2-\kappa_3\ne0\,.$$
Let
$\tilde e\in S^+(V,\langle\cdot,\cdot\rangle)$, let
$V_+:=\operatorname{Span}\{\tilde e,\Psi^1\tilde e\}$, and let $
V_-=V_+^\perp=\operatorname{Span}\{\Psi_2\tilde e,\Psi_3\tilde e\}$.
We then have an orthogonal direct sum decomposition $V=V_-\oplus V_+$ where $V_+$ is spacelike and $V_-$ is timelike. Decompose
$v\in N(V,\langle\cdot,\cdot\rangle)$ in the form $v=\lambda(e_++e_-)$ where $e_\pm\in V_\pm$. Let $\MM$ be spacelike Osserman.
We have $\mathcal{J}_A(v)=\lambda^2\mathcal{J}_A(e_++e_-)$. Since $\mathcal{J}_A(v)$ is nilpotent, $\mathcal{J}_A(v)$ and
$\mathcal{J}_A(e_++e_-)$ have the same Jordan normal form. Thus we may safely take $\lambda=1$ so $v=e_++e_-$.
Set $e=e_+$ and expand $e_-=\cos\theta\Psi_2e+\sin\theta\Psi_3e$. This expresses
$$
v=e+\cos\theta\Psi_2e+\sin\theta\Psi_3e\quad\text{for}\quad e\in S^+(V,\langle\cdot,\cdot\rangle)\,.
$$
We use the relations
$\Psi_1\Psi_2=\Psi_3$,
$\Psi_1\Psi_3=-\Psi_2$, and
$\Psi_2\Psi_3=-\Psi_1$ to see
\begin{equation}\label{eqn-3.b}
\begin{array}{l}\begin{array}{rrrrrr}
\Psi_1v=&0&+\Psi_1e&-\sin\theta\Psi_2e&+\cos\theta\Psi_3e,\\
\Psi_2v=&\cos\theta e&-\sin\theta\Psi_1e&+\Psi_2e&+0,\\
\Psi_3v=&\sin\theta e&+\cos\theta\Psi_1e&+0&+\Psi_3e,\end{array}\\
\ \ 0=\Psi_1v+\sin\theta\Psi_2v-\cos\theta\Psi_3v\,.
\end{array}\end{equation}
This shows that the vectors $\{\Psi_1v,\Psi_2v,\Psi_3v\}$
span a $2$-dimensional subspace.
As $\langle\Psi_iv,\Psi_jv\rangle=0$, $\operatorname{Span}\{\Psi_iv\}\subset\operatorname{Ker}\{\mathcal{J}_A(v)\}$.
As $\operatorname{Range}\{\mathcal{J}_A(v)\}\subset\operatorname{Span}\{\Psi_iv\}$,
$$\operatorname{Rank}\{\mathcal{J}_A(v)\}\le2\quad\text{and}\quad\mathcal{J}_A(v)^2=0\,.$$
Note that $\{e,\Psi_1e,\Psi_2v,\Psi_3v\}$ is a basis for $V$. Let $\pi_+$ denote orthogonal projection on
$V_+=\operatorname{Span}\{e,\Psi_1e\}$. As $\pi_+$ is injective on
$\operatorname{Range}\{\mathcal{J}_A(v)\}\subset\operatorname{Span}\{\Psi_2v,\Psi_3v\}$,
$$r(v):=\dim\operatorname{Range}\{\mathcal{J}_A(v)\}=
\dim\{\operatorname{Span}\{\pi_+\mathcal{J}_A(v)e,\pi_+ \mathcal{J}_A(v)\Psi_1e\}\}\,.$$

By Equation (\ref{eqn-3.b}),
\begin{eqnarray*}
&&\mathcal{J}_A(v)e=3 \kappa_2\cos\theta\Psi_2v+3 \kappa_3\sin\theta\Psi_3v,\\
&&\mathcal{J}_A(v)\Psi_1e=3 \kappa_1\Psi_1v-3 \kappa_2\sin\theta\Psi_2 v+3\kappa_3\cos\theta\Psi_3v,\\
&&\pi_+\mathcal{J}_A(v)e=3 \{\kappa_2\cos\theta(\cos\theta)+\kappa_3\sin\theta(\sin\theta)\}e\\&&\qquad\qquad
    +3 \{\kappa_2\cos\theta(-\sin\theta)+\kappa_3\sin\theta(\cos\theta)\}\Psi_1e,\\
&&\pi_+\mathcal{J}_A(v)\Psi_1e=3 \{-\kappa_2\sin\theta(\cos\theta)+\kappa_3\cos\theta(\sin\theta)\}e\\&&\qquad\qquad
+3\{\kappa_1-\kappa_2\sin\theta(-\sin\theta)+\kappa_3\cos\theta(\cos\theta)\}\Psi_1e\,.
\end{eqnarray*}
This leads to a coefficient matrix for $\pi_+\mathcal{J}_A(v)$ on $V_+$ given by
$$\mathcal{C}_A(\theta)=3\left(\begin{array}{ll}
\kappa_2\cos^2\theta+\kappa_3\sin^2\theta&(-\kappa_2+\kappa_3)\sin\theta\cos\theta\\
(-\kappa_2+\kappa_3)\sin\theta\cos\theta&\kappa_1+\kappa_2\sin^2\theta+\kappa_3\cos^2\theta
\end{array}\right)\,.$$
We compute:
\begin{eqnarray*}
{\textstyle \frac{1}{9}}  \det(\mathcal{C}_A)(\theta)
&=&\kappa_1\kappa_2\cos^2\theta+\kappa_2^2\cos^2\theta\sin^2\theta+\kappa_2\kappa_3\cos^4\theta\\
&+&\kappa_1\kappa_3\sin^2\theta+\kappa_2\kappa_3\sin^4\theta+\kappa_3^2\sin^2\theta\cos^2\theta\\
&-&\kappa_2^2\sin^2\theta\cos^2\theta-\kappa_3^2\sin^2\theta\cos^2\theta+2\kappa_2\kappa_3\sin^2\theta\cos^2\theta\\
&=&\kappa_1\kappa_2\cos^2\theta+\kappa_1\kappa_3\sin^2\theta+\kappa_2\kappa_3\\
&=&(\kappa_1+\kappa_3)\kappa_2\cos^2\theta+(\kappa_1+\kappa_2)\kappa_3\sin^2\theta\,.
\end{eqnarray*}
Observe that $\kappa_2\kappa_3=0$ implies that
$\det(\mathcal{C}_A)(\theta)$ vanishes for some $\theta$ and thus
$\MM$ is not null Jordan Osserman. Hence, since
$(\kappa_1+\kappa_3)\kappa_2$ and $(\kappa_1+\kappa_2)\kappa_3$ are
non-zero, $\det(\mathcal{C}_A)(\theta)$ never vanishes, or
equivalently $\MM$ is null Jordan Osserman, if and only if these two
real numbers have the same sign, i.e.
$\kappa_2\kappa_3(\kappa_1+\kappa_3)(\kappa_1+\kappa_2)>0$.

\subsection{Type Ib models}\label{sect-3.6}
Let $b\ne0$. We take a curvature tensor of the form:
$$
A=\textstyle\frac13\{(a-b)A^{\Psi_1}+(-b-a)A^{\Psi_2}+bA^{\Psi_1+\Psi_2}+cA^{\Psi_3}\}\,.
$$
Proceeding as in the previous case, we have  for any $e\in S^+(V,\langle\cdot,\cdot\rangle)$ that:
\begin{eqnarray*}
&&\mathcal{J}_A(x)y=\langle(a\Psi_1+b\Psi_2)x,y\rangle\Psi_1x
   +\langle(b\Psi_1-a\Psi_2)x,y\rangle\Psi_2x+c\langle\Psi_3x,y\rangle\Psi_3x,\\
&&\mathcal{J}_A(e)\Psi_1e=a\Psi_1e+b\Psi_2e,\
  \mathcal{J}_A(e)\Psi_2e=-b\Psi_1e+a\Psi_2e,\
  \mathcal{J}_A(e)\Psi_3e=-c\Psi_3e\,.
\end{eqnarray*}
Thus
$\MM:=(V,\langle\cdot,\cdot\rangle,A)$ is Type Ib and any
Type Ib model is isomorphic to $\MM$ for suitably chosen
parameters. As in  Section \ref{sect-3.5}, put
$v=e+\cos\theta\Psi_2e+\sin\theta\Psi_3e$. We compute:
\medbreak\quad
$\mathcal{J}_A(v)e=b\cos\theta\Psi_1v-a\cos\theta\Psi_2v+c\sin\theta\Psi_3v$,
\smallbreak\quad $\mathcal{J}_A(v)
\Psi_1e=(a-b\sin\theta)\Psi_1v+(b+a\sin\theta)\Psi_2v+c\cos\theta\Psi_3v$,
\smallbreak\quad
$\pi_+\mathcal{J}_A(v)e=\{-a\cos\theta(\cos\theta)+c\sin\theta(\sin\theta)\}e$
\par\qquad
$+\{b\cos\theta-a\cos\theta(-\sin\theta)+c\sin\theta(\cos\theta)\}\Psi_1e$,
\smallbreak\quad
$\pi_+\mathcal{J}_A(v)\Psi_1e=\{(b+a\sin\theta)(\cos\theta)+c\cos\theta(\sin\theta)\}e$
\par\qquad
$+\{(a-b\sin\theta)+(b+a\sin\theta)(-\sin\theta)+c\cos\theta(\cos\theta)\}\Psi_1e$.
\medbreak\noindent The coefficient matrix for
$\pi_+\mathcal{J}_A(v)$ on $V_+$ is then given by
$$\mathcal{C}_A(\theta)=\left(\begin{array}{ll}
-a\cos^2\theta+c\sin^2\theta&b\cos\theta+(a+c)\sin\theta\cos\theta\\
b\cos\theta+(a+c)\sin\theta\cos\theta&-2b\sin\theta+(a+c)\cos^2\theta
\end{array}\right)\,.$$
We have $\det(\mathcal{C}_A)(\frac\pi2)=-2bc$, $\det(\mathcal{C}_A)(-\frac\pi2)=2bc$. If $c\ne0$, then these signs differ and hence
$\det(\mathcal{C}_A)(\theta)=0$ for some $-\frac\pi2<\theta<\frac\pi2$ and $\MM$ is not null Jordan Osserman. If $c=0$, then
$\det(\mathcal{C}_A)(\frac\pi2)=0$ and $\det(\mathcal{C}_A)(0)=-a^2-b^2\ne0$ and again $\MM$ is not null Jordan Osserman. This
completes the analysis in this setting.

\subsection{Type II models}\label{sect-3.7}
We take a direct approach to this case.
Let $\MM=(V,\langle\cdot,\cdot\rangle,A)$ be a model of signature $(2,2)$, where $A$ is a Type II
algebraic curvature tensor. Then the analysis of \cite{BBR,GKV} shows there exists  an  orthonormal
basis $\{ e_1,e_2,e_3,e_4\}$ for
$V$ such that
the non-vanishing components of $A$ are
$$
\begin{array}{l}
   A_{1221}=A_{4334}=\pm\left(\alpha-\frac{1}{2}\right),\quad
   A_{1331}=A_{4224}=\mp\left(\alpha+\frac{1}{2}\right), \\[0.025in]
   A_{1441}=A_{3223}=-\beta, \quad
   A_{2113}=A_{2443}=\mp\frac{1}{2},\quad
   A_{1224}=A_{1334}=\pm\frac{1}{2},\\[0.025in]
   A_{1234}=\left(\pm\left(-\alpha+\frac{3}{2}\right)+\beta\right)/3,
   \quad
   A_{1423}=2(\pm\alpha-\beta)/3,\\[0.025in]
   A_{1342}=\left(\pm\left(-\alpha-\frac{3}{2}\right)+\beta\right)/3\,.
\end{array}
$$
Let $u=e_2-e_3$ and let $v=e_2+e_3$. Then
$$
   \J_A(u) =
   \left(
   \begin{array}{cccc}
      0 & 0 & 0 & 0
      \\
      0 & \beta & \beta & 0
      \\
      0 & -\beta & -\beta & 0
      \\
      0 & 0 & 0 & 0
   \end{array}\right)\quad\text{and}\quad
   \J_A(v) =
   \left(
   \begin{array}{cccc}
      \pm 2 & 0 & 0 & \mp 2
      \\
      0 & \beta & -\beta & 0
      \\
      0 & \beta & -\beta & 0
      \\
      \pm 2 & 0 & 0 & \mp 2
   \end{array}
   \right)\,.$$
If $\beta=0$, then $r(u)=0$ and $r(v)=1$; if $\beta\ne0$, then $r(u)=1$ and $r(v)=2$. Thus $\MM$ is not null Jordan Osserman.

\subsection{Type III models}\label{sect-3.8}
If $\MM$ is Type III, then
there exists an orthonormal basis $\{ e_1,e_2,e_3,e_4\}$ for
$V$  such
that the non-vanishing components of $A$ are (see \cite{BBR,GKV})
$$
\begin{array}{l}
   A_{1221}=A_{4334}=\alpha,\quad
   A_{1331}=A_{4224}=-\alpha,\quad
   A_{1441}=A_{3223}=-\alpha, \\
   \noalign{\medskip}
   A_{2114}=A_{2334}=-\sqrt{2}/2,\quad
   A_{3114}=-A_{3224}=\sqrt{2}/2, \\
   \noalign{\medskip}
   A_{1223}=A_{1443}=A_{1332}=-A_{1442}=\sqrt{2}/2.
   \end{array}
$$
Let $u=e_2-e_3$ and $v=e_2+e_3$. Then:
$$
\J_A(u) =
   \left(
   \begin{array}{cccc}
      0 & -\sqrt{2} & -\sqrt{2} & 0
      \\
      -\sqrt{2} & \alpha & \alpha & \sqrt{2}
      \\
      \sqrt{2} & -\alpha & -\alpha & -\sqrt{2}
      \\
      0 & -\sqrt{2} & -\sqrt{2} & 0
   \end{array}
   \right)\text{ and }
\J_A(v) =
   \left(
   \begin{array}{cccc}
      0 & 0 & 0 & 0
      \\
      0 & \alpha & -\alpha & 0
      \\
      0 & \alpha & -\alpha & 0
      \\
      0 & 0 & 0 & 0
   \end{array}
   \right)\,.$$
It now follows that $r(u)=2$ while $r(v)\le1$ and hence $\MM$ is not null Jordan Osserman. This completes the proof of Theorem
\ref{thm-1.3}.

\section{The proof of Theorem \ref{thm-1.4}}\label{sect-4}

Let $\mathcal{M}$ be a null Jordan Osserman manifold of signature $(2,2)$. First note that, by Theorem
\ref{thm-1.3}, $\mathcal{M}$ has Type Ia. Results of
\cite{BBR} then show that $\mathcal{M}$ either (a) has constant sectional curvature, (b) is locally isometric
to a complex space form, or (c) is locally isometric to a paracomplex space form. Since
the curvature tensor of a paracomplex space
form of constant paraholomorphic sectional curvature $\kappa$ satisfies
$$
   R(x,y)z
   =\mbox{$\frac{\kappa}{4}$} \left\{ R^0(x,y)z - R^J(x,y)z \right\};
$$
this is ruled out by Theorem \ref{thm-1.3}. This completes the proof of Theorem \ref{thm-1.4}

\section*{Acknowledgments}Research of E. Garc\'{\i}a-R\'{\i}o, M. E. V\'{a}zquez-Abal, and  R.
V\'{a}zquez-Lorenzo supported by projects  MTM2006-01432 and  PGIDIT06PXIB207054PR (Spain). Research of
P. Gilkey partially supported by the Max Planck Institute in the Mathematical Sciences (Leipzig, Germany)
and by Project MTM2006-01432 (Spain).


\begin{thebibliography}{99}


\bibitem{ABBR}
D. Alekseevsky, N. Bla\v zi\'c, N. Bokan, Z. Raki\'c; Self-duality
and pointwise Osserman manifolds, {\it Arch. Math. (Brno)} {\bf
35} (1999), 193--201.


\bibitem{BBR}
N. Bla\v zi\'c, N. Bokan, Z. Raki\'c; Osserman pseudo-Riemannian
manifolds of signature $(2,2)$, {\it J. Aust. Math. Soc.} {\bf 71}
(2001), 367--395.



\bibitem{BBG}
N. Bla\v zi\'c, N. Bokan, P. Gilkey; A note on Osserman Lorentzian manifolds,  {\it Bull.
London
Math. Soc.} {\bf 29} (1997), 227--230.

\bibitem{BGNS} M. Brozos-V\'azquez, P. Gilkey, S.
Nik\v cevi\'c,  U. Simon; Projectively Osserman manifolds, {\it to appear Publicationes Mathematicae Debrecen}, see
also http://arxiv.org/abs/0708.1210.

\bibitem{Chi}
Q. S. Chi; A curvature characterization  of certain locally
rank-one symmetric spaces, {\it J. Diff. Geom.} {\bf 28} (1988),
187--202.


\bibitem{DGV1}
J. C. D\'{\i}az-Ramos, E. Garc\'{\i}a-R\'{\i}o, R. V\'{a}zquez-Lorenzo; Four-dimensional
Osserman metrics with
nondiagonalizable Jacobi operators, {\it J. Geom. Anal.} {\bf 16} (2006), 39--52.


\bibitem{DGV}
J. C. D\'{\i}az-Ramos, E. Garc\'{\i}a-R\'{\i}o, R. V\'{a}zquez-Lorenzo; New examples of
Osserman
metrics with nondiagonalizable Jacobi operators, {\it Differential Geom. Appl.} {\bf 24}  (2006),
433--442.

\bibitem{F03}
B. Fiedler;
Determination of the structure of algebraic curvature tensors by means of Young symmetrizers,
{\it Seminaire Lotharingien de Combinatoire} {\bf B48d} (2003). 20 pp. Electronically published:
http://www.mat.univie.ac.at/$\sim$slc/; see also math.CO/0212278.

\bibitem{GKVa}
E. Garc\'{\i}a-R\'{\i}o, D. N. Kupeli, M. E. V\'{a}zquez-Abal; On a problem of
Osserman in Lorentzian geometry, {\it Differential Geom. Appl.}
{\bf 7} (1997), 85--100.


\bibitem{GKVaVl}
{E. Garc\'{\i}a-R\'{\i}o, D. N. Kupeli, M. E. V\'{a}zquez-Abal, R. V\'{a}zquez-Lorenzo;}
{Osserman affine connections and their Riemannian extensions}, {\it
Differential Geom. Appl.} {\bf 11} (1999), 145--153.



\bibitem{GKV}
E. Garc\'{\i}a-R\'{\i}o, D. N. Kupeli,  R. V\'{a}zquez-Lorenzo;
{\it Osserman manifolds in semi-Riemannian geometry}, Lect. Notes
Math. {\bf 1777}, Springer-Verlag, Berlin, Heidelberg, New York,
2002.

\bibitem{GR-VA-VL}
{E. Garc\'{\i}a-R\'{\i}o, M. E. V\'{a}zquez-Abal, R.
V\'{a}zquez-Lorenzo;} {Nonsymmetric Osserman pseudo-Riemannian
manifolds}, {\it Proc. Amer. Math. Soc.}  {\bf 126} (1998),
 2771--2778.

\bibitem{GV}
E. Garc\'{\i}a-R\'{\i}o, R. V\'{a}zquez-Lorenzo; Four-dimensional Osserman symmetric
spaces, {\it Geom. Dedicata}
{\bf 88} (2001), 147--151.

\bibitem{Gilkey1}
P. Gilkey; {\it Geometric Properties of Natural Operators Defined
by the Riemannian Curvature Tensor}, World Scientific Publishing
Co., Inc., River Edge, NJ, 2001.

\bibitem{G-I}
P. Gilkey, R. Ivanova; Spacelike Jordan Osserman algebraic
curvature tensors in the higher signature setting, {\it
Differential Geometry, Valencia, 2001}, 179--186, World Sci.
Publ., River Edge, NJ, 2002.

\bibitem{G-I-2}
P. Gilkey, R. Ivanova; The Jordan normal form of Osserman
algebraic curvature tensors, {\it Results Math.} {\bf 40} (2001),
192--204.

\bibitem{Marden}
A. Marden; {\it Outer circles. An introduction to hyperbolic
3-manifolds}, Cambridge University Press, Cambridge, 2007.
\end{thebibliography}
\end{document}